\documentclass[a4paper]{article} 

\usepackage[utf8]{inputenc}
\usepackage[english]{babel}
\usepackage[T1]{fontenc}

\usepackage{enumitem}

\usepackage{amsmath, amssymb, amsthm, dsfont, mathtools, stmaryrd, wasysym}
\usepackage{makecell}

\usepackage{centernot}

\usepackage{xcolor}
\usepackage{csquotes}

\usepackage{xparse,xstring}

\usepackage[hidelinks]{hyperref}

\usepackage[capitalise,nameinlink,noabbrev]{cleveref}

\usepackage{eqparbox}

\usepackage{tikz}
\usetikzlibrary{positioning}
\usetikzlibrary{shapes.geometric}
\usetikzlibrary{fit}

\theoremstyle{plain}
\newtheorem{theorem}{Theorem}[section]
\newtheorem{proposition}[theorem]{Proposition}

\newtheorem{corollary}[theorem]{Corollary}
\newtheorem{lemma}[theorem]{Lemma}

\theoremstyle{definition}
\newtheorem{definition}[theorem]{Definition}
\newtheorem{example}[theorem]{Example}

\theoremstyle{plain}
\newtheorem{observation}[theorem]{Observation}

\NewDocumentCommand{\alert}{s m}{\IfBooleanTF{#1}{\paragraph{}}{\par}\textbf{#2: }}

\RenewDocumentCommand{\phi}{}{\varphi}
\RenewDocumentCommand{\epsilon}{}{\varepsilon}

\NewDocumentCommand{\N}{}{\ensuremath{\mathbb{N}}}
\NewDocumentCommand{\Z}{}{\ensuremath{\mathbb{Z}}}

\NewDocumentCommand{\defas}{}{\vcentcolon=}

\NewDocumentCommand{\defiff}{}{\vcentcolon\mkern-5mu\iff}

\RenewDocumentCommand{\max}{}{\operatorname{max}}

\NewDocumentCommand{\set}{m o}{\left\{ #1 \IfValueT{#2}{\mid #2} \right\}}
\NewDocumentCommand{\collection}{m o}{\left( #1 \right)\IfValueT{#2}{_{#2}} }

\NewDocumentCommand{\powset}{m}{\mathcal{P}(#1)}

\RenewDocumentCommand{\subset}{}{\subseteq}
\RenewDocumentCommand{\supset}{}{\supseteq}

\NewDocumentCommand{\card}{m}{\left| #1 \right|}

\NewDocumentCommand{\isom}{}{\cong}

\NewDocumentCommand{\signature}{}{\sigma}

\NewDocumentCommand{\preds}{O{\signature}}{\mathcal{P}_{#1}}
\NewDocumentCommand{\rels}{O{\signature}}{\mathcal{R}_{#1}}
\NewDocumentCommand{\ar}{m}{\operatorname{ar}(#1)}

\NewDocumentCommand{\quantifier}{}{Q}

\NewDocumentCommand{\quantifiers}{}{\mathcal{Q}}

\NewDocumentCommand{\str}{m}{\mathfrak{#1}}

\NewDocumentCommand{\al}{}{\mathcal{L}}

\NewDocumentCommand{\kl}{o}{\mathcal{L}^{\IfValueTF{#1}{#1}{\quantifiers}}}

\NewDocumentCommand{\ML}{}{\operatorname{ML}}
\NewDocumentCommand{\FO}{}{\operatorname{FO}}

\NewDocumentCommand{\st}{o}{\operatorname{ST}\IfValueT{#1}{(#1)}}
\NewDocumentCommand{\ST}{o}{\operatorname{ST}}

\NewDocumentCommand{\Ass}{O{k} m}{\operatorname{Ass}_{#1}(#2)}

\NewDocumentCommand{\ltrue}{}{\top}
\NewDocumentCommand{\lfalse}{}{\bot}
\NewDocumentCommand{\limplies}{}{\rightarrow}
\NewDocumentCommand{\liff}{}{\leftrightarrow}

\NewDocumentCommand{\qr}{}{\operatorname{qr}}

\RenewDocumentCommand{\models}{}{\vDash}
\NewDocumentCommand{\nmodels}{}{\nvDash}

\NewDocumentCommand{\Th}{D<>{\kl} o m}{\operatorname{Th}_{#1}\IfValueT{#2}{^{#2}}(#3)}

\NewDocumentCommand{\player}{m}{\textbf{\MakeUppercase{\romannumeral #1}}}

\NewDocumentCommand{\filter}{}{\mathcal{F}}
\NewDocumentCommand{\uf}{}{\mathcal{U}}

\NewDocumentCommand{\de}{m}{\mathbf{#1}}

\NewDocumentCommand{\indices}{O{\alpha_{\uf}} m}{\llbracket #2 \rrbracket_{#1}}

\newcommand{\nc}{\newcommand}
\nc{\hnt}{\rule{0em}{3ex}}
\nc{\look}{\marginpar{$\bullet$}}
\nc{\ins}[1]{\bigskip\noindent
\framebox{\begin{minipage}{.95\textwidth} \sloppy \noindent \em #1 \end{minipage}}\bigskip}
\renewcommand{\subset}{\subseteq}
\renewcommand{\phi}{\varphi}

\newenvironment{romanenumerate}%
{\begin{list}{(\roman{enumi})}{\usecounter{enumi}
\setlength{\labelwidth}{2cm}
\setlength{\itemindent}{0pt}
\setlength{\itemsep}{0.5\itemsep}
\setlength{\topsep}{\itemsep}
\setlength{\parsep}{0pt}
}}{\end{list}}
\nc{\bre}{\begin{romanenumerate}}
\nc{\ere}{\end{romanenumerate}}
\newenvironment{alphaenumerate}%
{\begin{list}{(\alph{enumii})}{\usecounter{enumii}
\setlength{\labelwidth}{2cm}
\setlength{\itemindent}{0pt}
\setlength{\itemsep}{0.5\itemsep}
\setlength{\topsep}{\itemsep}
\setlength{\parsep}{0pt}
}}{\end{list}}
\nc{\bae}{\begin{alphaenumerate}}
\nc{\eae}{\end{alphaenumerate}}
\newenvironment{numenumerate}%
{\begin{list}{(\arabic{enumiii})}{\usecounter{enumiii}
\setlength{\labelwidth}{2cm}
\setlength{\itemindent}{0pt}
\setlength{\itemsep}{0.5\itemsep}
\setlength{\topsep}{\itemsep}
\setlength{\parsep}{0pt}
}}{\end{list}}
\nc{\bne}{\begin{numenumerate}}
\nc{\ene}{\end{numenumerate}}

\nc{\KM}{\mathcal{K}(\M)}

\nc{\INQML}{\mathrm{InqML}}
\nc{\MSO}{\mathrm{MSO}}
\nc{\LL}{\mathcal{L}}

\newcommand{\brck}[1]{[\![#1]\!]}
\usepackage{color}

\renewcommand{\preceq}{\preccurlyeq}
\renewcommand{\succeq}{\succcurlyeq}

\author{
 Janek Härtter \\
 \small\href{mailto:math@haertter.com}{math@haertter.com}
\and  Martin Otto \\ \small\href{mailto:otto@mathematik.tu-darmstadt.de}{otto@mathematik.tu-darmstadt.de}}
\date{January 2026}
\title{A Class of Generalised Quantifiers for k-Variable Logics}

\begin{document}
\maketitle
\begin{abstract}
  We introduce $k$-quantifier logics -- logics with access to $k$-tuples of elements 
and very general quantification patterns for transitions between $k$-tuples. 
The framework is very expressive and encompasses e.g.\ the
$k$-variable fragments of first-order logic, modal logic, and monotone
neighbourhood semantics. We introduce a corresponding notion of
bisimulation and prove variants of the classical Ehrenfeucht--Fra\"\i
ss\'e and Hennessy--Milner theorem. Finally, we show a Lindström-style
characterisation for $k$-quantifier logics that satisfy \L o\'s'
theorem by proving that they are the unique maximally expressive logics
that satisfy \L o\'s' theorem and are invariant under the associated bisimulation relations.
\end{abstract}

\section{Introduction}

Lindstr\"om's theorem for first-order logic $\FO$ is a hallmark of
abstract model theory. In its main version, as treated as   
Lindstr\"om's First Theorem e.g.\ in~\cite{EFT}, it characterises $\FO$
as \emph{maximally expressive} among logics that satisfy some very
basic standard properties of abstract logics together with 
\emph{compactness} and the downward \emph{L\"owenheim-Skolem
  Theorem} as the essential constraints. Among the standard properties that competitors must
satisfy are basic semantic principles 
like isomorphism invariance, closure under booleans, natural behaviour
w.r.t.\ renaming of symbols, and the like. The two crucial extra
properties that are characteristic of $\FO$ against this backdrop are its compactness
(satisfiability of every finite subset implies satisfiability)
and the countable L\"owenheim-Skolem
condition (satisfiability of a  countable set of formulae implies
satisfiability over an at most countably infinite domain).
For $\FO$, compactness is an immediate corollary to G\"odel's
Completeness Theorem, but can also be derived -- in purely
model-theoretic terms --
from compatibility of $\FO$ with ultraproducts.
This concerns the universal-algebraic generation of natural quotients of direct products 
w.r.t.\ ultrafilter equivalence.  That essential model-theoretic
feature of $\FO$ is the content of \L o\'s' Theorem (cf.~Theorem~\ref{theorem:los} below, and see e.g.~\cite{EFT} for a textbook account),
which states that
the formulae $\phi \in \FO$ satisfied in such an ultrafilter-reduced product
over an infinite family of structures are precisely those for which
the set of component structures where $\phi$ is satisfied is in the 
ultrafilter
(see Section~\ref{section:ultraproducts} for a very short review of the basic
terminology, which is standard textbook material).
Deeper set-theoretic principles are involved in the Keisler--Shelah
Theorem~\cite[Theorem~6.1.15]{ChangKeisler1990}, which links this \L o\'s-phenomenon more directly to $\FO$
expressiveness by stating that elementarily equivalent structures 
possess isomorphic ultrapowers. This implies that no logic that respects isomorphisms
and is compatible with ultraproducts (or even just ultrapowers, as a
special case)  can possibly distinguish any two structures that 
are indistinguishable in~$\FO$; in other words: any such logic must
not only satisfy compactness, but its expressive power is directly dominated by
that of $\FO$.
Building on this insight, Sgro established the maximality of logics on certain classes of models as being maximally expressive among logics satisfying \L o\'s' theorem \cite{sgro1977maximal}.
So much for the classical picture.

A much more recent rekindling of the interest in Lindstr\"om-like
results has come with an emphasis on additional semantic
constraints, mostly in the sense of invariance conditions.
These are often motivated by typical application domains where often the desirable
levels of expressiveness can be considerably weaker than, or possibly also 
incomparable with, $\FO$. One of the prominent scenarios 
in this vein is that of modal logics where
specific constraints on the accessibility of information are modelled 
in Kripke structures. Under this modelling regime, the full expressive power  
of $\FO$ would potentially capture irrelevant features while maybe  
unnecessarily missing out on others. In the basic modal scenario, 
the natural apriori intended restriction for the semantics
is captured by \emph{bisimulation invariance}. This semantic
constraint can be seen as a severe strengthening of
isomorphism invariance, which however
may arguably be no less natural in specific contexts. 
The renewed investigation of Lindstr\"om phenomena with a systematic focus on
such restricted scenarios is highlighted in the seminal paper by 
van~Benthem, ten~Cate and V\"a\"an\"anen~\cite{vBtCV09}, and also
explored e.g.\ in~\cite{OP08}. Clearly Lindstr\"om-type maximality
assertions admit many variations in terms of the choice of
background principles from abstract model theory (like compactness,
the Tarski union property TUP, a \L o\'s-property, or locality principles)
besides the additional, domain-specific semantic
constraint embodied in invariance conditions 
or restrictions to classes of models as in frame conditions for
modal logic~\cite{Enqvist}.
Depending on how much 
the background principles already on their own restrict the
expressive power of the target logic to a standard logic like $\FO$,
a Lindstr\"om result may also be read as a \emph{characterisation theorem} 
in relation to the extra invariance condition. To give one example in
the classical modal scenario targeting basic modal logic $\ML$,
van~Benthem's characterisation of $\ML$ as expressively complete for
the bisimulation-invariant fragment of $\FO$ can also be cast as a corollary
to a Lindstr\"om theorem that establishes $\ML$ as maximally
expressive among bisimulation-invariant logics satisfying some
sufficiently strong background conditions that in effect tie them to within
$\FO$, cf.~\cite{{vBtCV09},OP08,Enqvist}.
Interestingly, semantic invariance conditions based on
Ehrenfeucht--Fra\"\i ss\'e-style back-and-forth
equivalences (viz.\ partial isomorphy) can also be seen at the
root of the classical Lindstr\"om result (cf.\ presentation in~\cite{EFT}) 
and variations for more expressive logics in the spirit of abstract
model theory~\cite{FlumBarwiseFeferman}. Here we use a natural adaptation 
of bisimulation invariance on $k$-tuples as a semantic invariance condition
that directly reflects the semantic constraints in the underlying
$k$-quantifiers. Also in this respect our results highlight -- once
again -- the intimate connection between Lindstr\"om and expressive 
completeness results. In a different vein, 
recent investigations by Liao~\cite{Liao23} generalise
the classical Lindstr\"om proof to
an entire class of logics that can encode their respective
Ehrenfeucht--Fra\"\i ss\'e games.
\par The idea for the present paper originated from the first author's bachelor's thesis \cite{Janek}.
We use the \L o\'s-property of compatibility with ultraproducts
as an extremely strong background condition from abstract model
theory for a Lindström-style characterisation. While this may look like a possible overkill in light of the
Keisler--Shelah Theorem (as it imposes a first-order ceiling for
expressiveness), it stands out as a very neat principle that may be
motivated from a purely universal-algebraic point of view.
But more importantly, our main Lindstr\"om argument in
Section~\ref{section:lindstroem} 
does not rely on Keisler--Shelah but rather establishes maximality, for a
very wide class of target logics in question, by techniques that do
not directly draw on their first-order nature -- even though that is
indirectly imposed by the \L o\'s-property. And even in light of that
deeper implication, which we discuss in Section~\ref{section:ultraproducts}, and 
for a corresponding interpretation of our main result in terms of
characterisation theorems for fragments of $\FO$, the generality 
w.r.t.\ the target logics involved and their natural semantic invariance
conditions make these results interesting in our view. 
As for those target logics we focus on a rather general framework
that does not immediately suggest first-order character but rather
focuses on an intuition of limited access to, and information links
between, local configurations or states, as exemplified in familiar
examples in the range of modal logics and neighbourhood semantics~\cite{Pacuit2017}.
For relevant \emph{local configurations} we concentrate on fixed-arity
tuples of elements, and correspondingly consider structures with
$k$-tuples of elements, \emph{$k$-pointed structures}, for fixed $k
\geq 1$, as input for the semantics of our abstract logics.%
\footnote{This is similar in spirit to  the limitation to
  $k$-variable fragments of standard logics; 
  cf.\ $k$-variable fragments like $\FO^k \subset \FO$ or its
  infinitary variants, and, e.g.\ especially their extensions by
  counting quantifiers, like $\mathrm{C}^k \subset \FO$,
  with their established relevance in algorithmic model theory~\cite{EbbinghausFlumFMT,LibkinFMT,DawarLindellWeinstein95,OttoLNL}.}
The expressive power of our target logics then rests on an apriori very
general notion of \emph{$k$-quantifiers}, which access sets of
accessible $k$-tuples from any given $k$-tuple in such a structure; 
the underlying access pattern and style of quantification based on these 
is very much akin to (monotone) neighbourhood semantics if we think of
the sets of accessible $k$-tuples as neighbourhoods. The main point 
lies in the extremely liberal format for these $k$-quantifiers (or the
associated neighbourhood systems of witness sets supporting their
semantics) that admits almost any sensible (viz.~isomorphism
invariant) allocation of systems of witness sets in $k$-pointed
structures, cf.\ Section~\ref{section:klogics} for details.
The additional semantic invariance conditions that come
with a collection of $k$-quantifiers are modelled on the natural
generalisations of bisimulation invariance, for a notion of
Ehrenfeucht--Fra\"\i ss\'e style 
back\&forth games in which the alternating probing of
$k$-configurations is guided by access to the available
witness sets underlying the semantics of $k$-quantifiers, as discussed
in Sections~\ref{section:bisim},~\ref{section:EF} and~\ref{section:HMthm}. 

\par Other frameworks for capturing the semantics of entire classes of
logics have been explored in the past. Most notably,
\emph{Lindström quantifiers} (cf.~\cite[Chapter~4]{ebbinghaus1985})
are much more general and versatile 
than our $k$-quantifiers. While this expressive strength allows them to capture almost any conceivable logic, their power is
rather an obstacle in our setting.
Their very generality makes it hard to capture and restrict their expressive power
in the manner proposed here. 
\par\emph{Neighbourhood semantics} for modal logic, as explored extensively in~\cite{Pacuit2017}, is closely related to our notion of $1$-quantifier logics, the unary version of $k$-quantifier logic. There is essentially a one-to-one correspondence between up-ward-monotone neighbourhood functions for the box operator and
our witness sets for $1$-quantifiers. The two approaches mainly differ in that neighbourhood semantics considers the neighbourhood function as part of the admissible specifications of the structures under consideration, while the witness sets of $1$-quantifiers are considered as `externally determined' by the quantifier.
One could therefore think of $1$-quantifiers as imposing extremely restrictive frame conditions for neighbourhood functions.
\par For an illustration of the expressive power of $1$-quantifier logics see Examples~\ref{example:1-logics}, \ref{example:mns}, and \ref{example:fo-k} below.
\paragraph{} This paper is structured as follows: In
Section~\ref{section:klogics}, we introduce our notion of
$k$-quantifiers and $k$-quantifier logics, and give well-known examples of
logics covered by our framework.
We define a corresponding bisimulation game in
Section~\ref{section:bisim}. In Sections~\ref{section:EF} and~\ref{section:HMthm}
we adapt the classical
Ehrenfeucht--Fra\"\i ss\'e and Hennessy--Milner theorems to
$k$-quantifier logics. In Section~\ref{section:ultraproducts}, we investigate properties of $k$-quantifier logics that have the \L o\'s property and conclude our investigations with a Lindström-style theorem for such $k$-quantifier logics in Section~\ref{section:lindstroem}.

\section{Abstract $k$-Logics}
\label{section:klogics}
Throughout our investigations, we restrict ourselves to
purely relational signatures. The arity of a given relation $R$ is denoted as $\ar{R}$.
\par The universe of a $\signature$-structure $\str{A}$ is denoted as $A$. A pair $(\str{A}, \alpha)$ consisting of a $\signature$-structure $\str{A}$ and a corresponding variable assignment $\alpha$ is called a \emph{pointed $\signature$-structure} or \emph{pointed structure} for short.
\par For $k \in \N$, a \emph{$k$-assignment} is an assignment over $k$ variables. A pointed $\signature$-structure $(\str{A}, \alpha)$ is a \emph{$k$-pointed ($\signature$-)structure} if $\alpha$ is a $k$-assignment. We associate $k$-assignments $\alpha : \set{x_1, \dots, x_k} \to A$ with the $k$-tuple $(\alpha(x_1), \dots, \alpha(x_k)) \in A^k$ to ease notation.

\begin{definition}[abstract logics]\label{definition:abstract-logics}
	An \emph{abstract logic} $\al = (L, \models_{\al})$ is a pair consisting of
	\begin{itemize}
		\item a function $L$ mapping signatures to sets of formulae, and
		\item a satisfaction relation $\models_{\al}$ relating pointed $\signature$-structures and formulae in $L(\signature)$.
	\end{itemize}
	We require an abstract logic to
	\begin{itemize}
		\item 
		      be invariant under isomorphism, i.e.\ for any $\phi \in L(\signature)$, pair of isomorphic $\signature$-structures $\iota: \str{A} \simeq \str{B}$ and variable assignment $\alpha$ in $\str{A}$,
		      \begin{align*}
			      \str{A}, \alpha \models_{\al} \phi \quad & \iff \quad \str{B}, \iota \circ \alpha \models_{\al} \phi
			      ,\end{align*}
		\item be closed under boolean connectives, i.e.\
		      \begin{align*}
			      \phi, \psi \in L(\signature) \quad & \implies \quad \ltrue, \, \lnot \phi, \, (\phi \land \psi) \in L(\signature)
			      ,\end{align*}
		      \item have the (syntactic) finite occurrence
                        property, i.e.\ for every signature
                        $\signature$, $\phi \in \al(\signature)$ there
                        is a finite $\signature_0 \subset \signature$
                        s.t.\ $\phi \in \al(\signature_0)$,
         \item be invariant under renaming, i.e.\ for any
         signature $\signature$, fresh $k$-ary
         relations $R, R^{\prime} \notin \signature$,
         and $\phi \in \al(\signature \cup \set{R})$,
         there is some $\phi^{\prime} \in \al(\signature \cup
         \set{R^{\prime}})$ such that for all pointed $\signature$-structures
	$(\str{A}, \alpha)$ and $R^{\str{A}} = {R^{\prime}}^{\str{A}} \subset A^k$,
	\begin{align*}
		\str{A}, R^{\str{A}}, \alpha \models \phi \quad&\iff\quad \str{A}, {R^{\prime}}^{\str{A}}, \alpha \models \phi^{\prime}
	.\end{align*}

	\end{itemize}
	\par An \emph{abstract $k$-logic} $\al$ is an abstract logic that defines semantics for $k$-pointed $\signature$-structures with variable assignments over $\mathcal{X}_k \defas \set{x_1, \dots, x_k}$.
\end{definition}

We treat the connectives $\lor$, $\limplies$ and $\liff$ as well as the formula $\lfalse$ as abbreviations in the usual manner.
To ease notation, we furthermore
do not distinguish between $L$ and
$\al$, or write just $\models$ instead of $\models_{\al}$, where this causes no confusion.
\par Note that as we mostly consider compact logics throughout our investigations, the finite occurrence property is mainly stated for convenience as compactness and invariance under renaming implies the finite occurrence property in a semantic sense (cf.\ \cite[Proposition~5.1.2]{ebbinghaus1985}).

\paragraph{} An abstract logic $\al_1$ is a \emph{fragment} of an
abstract logic $\al_2$ or $\al_2$ an \emph{extension} of $\al_1$,
denoted as ${\al_1 \preceq \al_2}$ or ${\al_2 \succeq \al_1}$,
if for every signature $\signature$ and every formula
$\phi \in \al_1(\signature)$, there exists a formula $\psi \in \al_2(\signature)$ such that for all pointed $\signature$-structures $(\str{A}, \alpha)$,
\begin{align*}
	\str{A}, \alpha \models \phi \quad & \iff \quad \str{A}, \alpha \models \psi
.\end{align*}

As syntactic differences are immaterial for our purposes, we
assume that for all signatures $\signature$,
\begin{align*}
	\al_1 \preceq \al_2 \quad & \implies \quad L_1(\signature) \subset L_2(\signature)
.\end{align*}
\par Two abstract logics $\al_1$ and $\al_2$ are \emph{equivalent}, denoted as $\al_1 \equiv \al_2$, if $\al_1 \preceq \al_2$ and $\al_2 \preceq \al_1$.

\begin{definition}[$k$-quantifiers]\label{definition:k-quantifiers}
	A \emph{$k$-quantifier} $\quantifier$ is a symbol together with
	\begin{itemize}
		\item an associated signature $\signature_{\quantifier}$, and
		\item a \emph{witness set} $\quantifier(\str{A}, \alpha) \subset \powset{A^k}$
                  for every (expansion of a)
                  $k$-pointed $\signature_{\quantifier}$-structure
                  $(\str{A}, \alpha)$. $\quantifier$ needs to be
                  well-behaved w.r.t.\ isomorphisms and
expansions, i.e.\ if $\iota: (\str{A} \restriction \signature_{\quantifier}) \simeq
                      (\str{B} \restriction \signature_{\quantifier})$,
			then for any $k$-assignment $\alpha$
		      \begin{align*}
			      \quantifier(\str{B} \restriction
                        \signature_{\quantifier}, \iota \circ \alpha) =
                        \quantifier(\str{B}, \iota \circ \alpha) = \set{
                        \set{ \iota \circ \gamma }[ \gamma \in s ] }[ s \in
                        \quantifier(\str{A} \restriction
                        \signature_{\quantifier}, \alpha) ]
                      .\end{align*}
	\end{itemize}
\end{definition}

Note that the provision that $k$-quantifiers are well-behaved w.r.t.\ expansions already implies that $k$-quantifiers respect isomorphisms.
\par We call the members of a witness set \emph{witnesses} and, as the underlying structure should usually be clear from context, we may shorten $\quantifier(\str{A}, \alpha)$ to just $\quantifier(\alpha)$.
\par For a given set  $\quantifiers$ of $k$-quantifiers, we let
\begin{align*}
  \quantifiers_{\signature} & \defas \set{\quantifier \in \quantifiers}[\signature_{\quantifier} \subset \signature]
.\end{align*}

\begin{definition}[$k$-quantifier logics]\label{definition:k-logics}
	A \emph{$k$-quantifier logic} $\kl$ is an abstract
	$k$-logic based on 
	a class of $k$-quantifiers $\quantifiers$ with syntax and
        semantics given as follows.
        \par For a given signature $\signature$, the \emph{syntax} of $\kl(\signature)$ is given inductively via
	\begin{itemize}
               \item $\ltrue \in \kl(\signature)$,
		\item for all $l$-ary relations $R \in \signature$ and
                  variables $y_{1}, \dots, y_{l} \in \mathcal{X}_{k}$,
                  the formula $R(y_{1}, \dots, y_{l})$ is in $\kl(\signature)$,
		\item for all $\phi \in \kl(\signature)$, also $\lnot \phi \in \kl(\signature)$,
		\item for all $\phi_1, \phi_2 \in \kl(\signature)$, also $(\phi_1 \land \phi_2) \in \kl(\signature)$,
		\item for all $\phi \in \kl(\signature)$ and $\quantifier \in \quantifiers_{\signature}$, also $\quantifier \phi \in \kl(\signature)$.
	\end{itemize}
	\par The \emph{satisfaction relation} is also defined in an inductive fashion: for all signatures $\signature$, $\quantifier \in \quantifiers_{\signature}$ and $k$-pointed $\signature$-structures $(\str{A}, \alpha)$,
	\begin{itemize}
		\item $\str{A}, \alpha \models \ltrue$,
		\item $\str{A}, \alpha \models R(y_{1}, \dots, y_{l})$ if $(\alpha(y_{1}), \dots, \alpha(y_{l})) \in R^{\str{A}}$,
		\item $\str{A}, \alpha \models \lnot \phi$ if $\str{A}, \alpha \nmodels \phi$,
		\item $\str{A}, \alpha \models (\phi_1 \land \phi_2)$ if $\str{A}, \alpha \models \phi_1$ and $\str{A}, \alpha \models \phi_2$,
		\item $\str{A}, \alpha \models \quantifier \phi$ if there exists an $s \in \quantifier(\str{A},\alpha)$ such that for all $\gamma \in s$: $\str{A}, \gamma \models \phi$.
	\end{itemize}
        Inspired by flat team semantics, we define for subsets $s \subset A^k$ that 
	\begin{align*}
		\str{A}, s \models \phi \qquad & \defiff \qquad \text{f.a.\ } \alpha \in s: \str{A}, \alpha \models \phi
		.\end{align*}
\end{definition}

Note that while we could include equality as a literal in the
definiton of $k$-quantifier logics, we chose not to as it can be
easily added by introducing it as an extra binary relation if desired.

\begin{observation}[upward monotonicity]
	Let $\kl$ be a $k$-quantifier logic. Then $\kl$ is \emph{upward monotone}, i.e., for any formula $\phi \in \kl(\signature)$, $k$-pointed $\signature$-structure $(\str{A}, \alpha)$, and $s \in Q(\alpha)^{\uparrow} \defas \set{s^{\prime} \subset A^k}[\text{ex.\ } t \in Q(\alpha) \text{ with } t \subset s^{\prime}]$,
	\begin{align*}
		\str{A}, s \models \phi \quad &\implies \quad \str{A}, \alpha \models Q \phi
	.\end{align*}
\end{observation}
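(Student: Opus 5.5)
The observation follows directly from the semantic clause for $\quantifier\phi$ in Definition~\ref{definition:k-logics}; no induction on formulae is needed, because the claim concerns a single application of $\quantifier$ to an arbitrary (already interpreted) formula $\phi$. I fix $\phi \in \kl(\signature)$ (with $\quantifier \in \quantifiers_{\signature}$, so that $\quantifier\phi$ is a formula), a $k$-pointed $\signature$-structure $(\str{A},\alpha)$, and some $s \in \quantifier(\alpha)^{\uparrow}$ with $\str{A}, s \models \phi$.

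First, by definition of $\quantifier(\alpha)^{\uparrow}$, I choose a witness $t \in \quantifier(\str{A},\alpha)$ with $t \subset s$. Second, I show that the flat team satisfaction $\str{A}, s \models \phi$ passes to subsets. Indeed, $\str{A}, s \models \phi$ means $\str{A}, \gamma \models \phi$ for all $\gamma \in s$. Since $t \subset s$, this holds in particular for all $\gamma \in t$, so $\str{A}, t \models \phi$.

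Third, $t$ is a member of $\quantifier(\str{A},\alpha)$ and all $\gamma \in t$ satisfy $\phi$. This is exactly the existential clause defining $\str{A}, \alpha \models \quantifier\phi$, with $t$ as the required element of the witness set. That concludes the argument.

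There is no real obstacle here. The only point to flag is that the witness used is $t$, not $s$ itself: $s$ need not belong to $\quantifier(\alpha)$. This is why the argument goes through downward closure of team satisfaction rather than by plugging in $s$ directly.
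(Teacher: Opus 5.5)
Your proof is correct and is exactly the direct unfolding of the semantic clause for $Q\phi$ that the paper leaves implicit, since it states this as an observation without proof. You choose $t \in Q(\alpha)$ with $t \subset s$, note that flat satisfaction passes to subsets so that $\str{A}, t \models \phi$, and then use $t$ rather than $s$ as the required witness.
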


\begin{definition}[quantifier rank]\label{definition:quantifier-rank}
	For a given $k$-quantifier logic $\kl$ and signature $\signature$, we define the \emph{quantifier rank} of formulae in $\kl(\signature)$ inductively as
	\begin{align*}
		\qr(\ltrue)              & \defas 0,                                                                                                                     \\
		\qr(R(y_1, \dots, y_l))  & \defas 0                                & \text{for any } R \in \signature, \text{variables } y_1, \dots, y_l,                     \\
		\qr(\lnot \phi)          & \defas \qr(\phi)                        & \text{for any } \phi \in \kl(\signature),                                           \\
		\qr(\phi_1 \land \phi_2) & \defas \max\{\qr(\phi_1), \qr(\phi_2)\} & \text{for any } \phi_1, \phi_2 \in \kl(\signature),                                 \\
		\qr(\quantifier \phi)    & \defas 1 + \qr(\phi)                    & \text{for any } \phi \in \kl(\signature), \quantifier \in \quantifiers_{\signature}
	.\end{align*}
\end{definition}

\begin{definition}[$\kl$-elementary equivalence]\label{definition:elementary-equivalence}
	Let $\kl$ be a $k$-quantifier logic. Then two $k$-pointed $\signature$-structures $(\str{A}, \alpha)$ and $(\str{B}, \beta)$ are \emph{$\kl$-elementarily equivalent}, denoted as $(\str{A}, \alpha) \equiv_{\kl} (\str{B}, \beta)$, if for all $\phi \in \kl(\signature)$,
	\begin{align*}
		\str{A}, \alpha \models \phi \quad & \iff \quad \str{B}, \beta \models \phi
		.\end{align*}
	\par Similarly, $(\str{A}, \alpha)$ and $(\str{B}, \beta)$ are \emph{$\kl_q$-elementarily equivalent} for $q \in \N$, denoted $(\str{A}, \alpha) \equiv^q_{\kl} (\str{B}, \beta)$, if for all $\phi \in \kl(\signature)$ with $\qr(\phi) \leq q$,
	\begin{align*}
		\str{A}, \alpha \models \phi \quad & \iff \quad \str{B}, \beta \models \phi
		.\end{align*}
\end{definition}

\begin{definition}[$\mathcal{L}$-theories]\label{definition:theory}
  For a given k-quantifier logic $\kl$ and signature $\signature$, we denote the \emph{$\kl$-theory} of a pointed $\signature$-structure $(\str{A}, \alpha)$ as
  \begin{align*}
    \Th{\str{A}, \alpha}    & \defas \set{\phi \in \kl(\signature)}[\str{A}, \alpha \models \phi]                    \\
    \intertext{and the $\kl_q$-theory for a given quantifier rank $q \in \N$ as}
    \Th[q]{\str{A}, \alpha} & \defas \set{\phi \in \kl(\signature)}[\str{A}, \alpha \models \phi, \qr(\phi) \leq q]. \\
    \intertext{Similarly, for $s \subset A^k$, we define}
    \Th{\str{A}, s}         & \defas \set{\phi \in \kl(\signature)}[\str{A}, s \models \phi],                        \\
    \Th[q]{\str{A}, s}      & \defas \set{\phi \in \kl(\signature)}[\str{A}, s \models \phi, \qr(\phi) \leq q]
  .\end{align*}
\end{definition}

\paragraph{}

\begin{example}[modal quantifiers as 1-quantifiers]\label{example:1-logics}
For a given binary relation $R \subset A \times A$, we denote the set of successors of a given element $a \in A$ as
\begin{align*}
	R[a] \defas \set{c \in A}[(a, c) \in R]
	.\end{align*}
	\par We start by defining some well-known 1-quantifiers that
come up in a modal setting,
as summarised in the following table in which $R$ denotes a binary
relation.
	\begin{center}
		\begin{tabular}{c | c | c | c}
			$Q$                 & semantics
                  &
                    $\sigma_Q$
                  & reference            \\
			\hline
			$\Diamond$          & there is an $R$-successor s.t.\                      & $(R)$       & \cite{blackburn2007} \\
			$\Diamond^{\geq k}$ & there are at least $k$ $R$-successors s.t.\                   & $(R)$       & \cite{hoek1992}      \\
			$\forall$           & all elements satisfy                                   & $\emptyset$ & \cite{goranko1992}   \\
			$\exists$           & there is an element s.t.\                            & $\emptyset$ & \cite{goranko1992}   \\
			$\circ$             & there is a cycle of
                                              length $\geq 3$ of
                                              elements s.t.\            & $(R)$       &                      \\
			$\bullet$           & there are infinitely many reflexive $R$-successors s.t.\ & $(R)$       & \cite{vBtCV09}   \\
			$\hookrightarrow$   & there is a reachable element s.t.\                   & $(R)$       & \cite{rescher2012}   \\
		\end{tabular}
              \end{center}
	These are obtained as $1$-quantifiers with the following witness sets:
	\begin{align*}
		\Diamond(\str{A}, a)          & \defas \set{ \set{c} }[ c \in R^{\str{A}}[a] ],                                                                                                                               \\
		\Diamond^{\geq k}(\str{A}, a) & \defas \set{ s \subset A }[ \card{s \cap R^{\str{A}}[a]} \geq k ],                                                                                                            \\
		\forall(\str{A}, a)           & \defas \set{ A },                                                                                                                                                             \\
		\exists(\str{A}, a)           & \defas \set{ s \subset A }[ s \neq \emptyset ],                                                                                                                               \\
		\circ(\str{A}, a)             & \defas \bigl\{ \set{c_0, \dots, c_{l - 1}} \subset A \mid l \geq 3 \text{ s.t.\ f.a.\ } i \in \Z_l: (c_i, c_{i + 1}) \in R^{\str{A}} \bigr\},      \\
		\bullet(\str{A}, a)           & \defas \set{ s \subset R^{\str{A}}[a] }[ \card{s} = \infty, \text{ f.a.\ } c \in s: (c, c) \in R^{\str{A}} ],                                                                 \\
		\hookrightarrow(\str{A}, a)   & \defas \bigl\{ \set{c_l} \subset A \mid l \geq 0, c_0, \dots, c_{l} \in A                                                                                              \\ & \qquad \qquad \text{ s.t.\ } c_0 = a \text{ and f.a.\ } 0 \leq i < l: (c_i, c_{i + 1}) \in R^{\str{A}} \bigr\}
		.\end{align*}
\end{example}

\begin{example}[monotonic neighbourhood semantics versus $1$-quantifiers]\label{example:mns}
	More generally, we can capture monotonic neighbourhood semantics in terms of $1$-quantifier logics.
	\par A neighbourhood model $\str{M} \defas (W, N, V)$ consists of a universe $W$, a neighbourhood function $N : W \to \powset{\powset{W}}$, and a valuation $V : p_i
		\mapsto P_i \in \powset{W}$, defining the relational interpretation for the basic propositions $p_i$.
	The standard logic is modelled on basic modal language, with standard semantics
	for atomic formulae, propositional connectives, and 
	\begin{align*}
		\str{M}, w \models \Box \phi \quad &\defiff \quad \set{u \in W}[\str{M}, u \models \phi] \in N(w).\end{align*}
	
	For a thorough introduction to neighbourhood semantics, we refer the reader to~\cite{Pacuit2017}.
	A neighbourhood model is monotonic if all $N(w)$ are closed under passage to supersets: $N(w) = N(w)^\uparrow$.
	Let $\mathcal{C}$ be a class of monotonic neighbourhood models such that for every pair ${(W, N, V), (W^{\prime}, N^{\prime}, V^{\prime}) \in \mathcal{C}}$
	\begin{align}
		(W,V) \simeq  (W^{\prime}, V^{\prime})
		\quad \implies \quad
		(W,V,N) \simeq  (W^{\prime}, V^{\prime},N^{\prime}) \label{nh-provisio}.\end{align}
	This allows us to inductively translate any given $\phi \in \ML$ into the language of \mbox{$1$-quantifier} logics akin to the standard translation of basic modal logic over Kripke frames. We translate box as
	$\st(\Box \psi) \defas Q \st(\psi)$
	with witness sets according to 
	$Q((W, V), w) \defas N(w)$. 
	We find that, for $P_i^{\str{M}} \defas V(p_i)$ and $w \in W$,
	\begin{align*}
		(W, N, V), w \models \phi \quad & \iff \quad (W, (P_i^{\str{M}})_{i \in \N}), w \models_{\kl[\set{Q}]} \st(\phi)
		.\end{align*}
		In this sense, $1$-quantifiers are equivalent to neighbourhood semantics in restriction to a class of models given by the frame condition above.
	\par Note that this construction `externalises' the neighbourhood functions
	to encode them as $1$-quantifiers. Thus, \eqref{nh-provisio} is necessary to avoid the violation of isomorphism invariance of $\kl[\set{Q}]$.
\end{example}

\begin{example}[first-order and counting quantifiers]\label{example:fo-k}
	Fix some $k \geq 1$ and define
	\begin{align*}
		\quantifiers \defas \set{ \exists^{\geq n}_{x_i} }[n \in \N, 1 \leq i \leq k]
	\end{align*}
	with
	$  \exists^{\geq n}_{x_i} (\str{A}, \alpha) \defas \set{ s \subset A^k }[ |s| \geq n, \text{ f.a.\ } \gamma \in s, j \neq i: \gamma(x_j) = \alpha(x_j) ]$.
	Then the logic $\kl \equiv \mathrm{C}^k$ is the $k$-variable fragment of first-order logic with counting quantifiers;
	and the restriction to $(\exists^{\geq 1}_{x_i})_{1 \leq i
          \leq k}$ corresponds to the $k$-variable fragment $\FO^k$ of
        first-order logic~\cite{EbbinghausFlumFMT,LibkinFMT,DawarLindellWeinstein95,OttoLNL}.
      \end{example}

\section{Bisimulation}\label{section:bisim}

We aim for natural characterisations of the expressiveness of $k$-quantifier logics in the traditional model-theoretic tradition of back\&forth techniques that capture levels
of $\kl$-indistinguishability between structures with
assignments. For classical first-order logic $\FO$ this is
exemplified in the original Ehrenfeucht--Fra\"\i ss\'e approach
(cf.~\cite{EFT}), which has many natural restrictions and adaptations for $\FO^k$ and $\mathrm{C}^k$, as well as, e.g.\ the guarded fragment
$\mathrm{GF}$ of $\FO$, in terms of suitable pebble game equivalences.
\par In the modal tradition, the same idea arises -- more fundamentally, in a way -- in the traditional shape of bisimulation games and equivalences. Regarding the concept of back\&forth equivalences in the bisimulation format as the most fundamental incarnation of the idea, we use that terminology but, for reasons of historical priority, refer to the crucial links as (adaptations of the) Ehrenfeucht--Fra\"\i ss\'e theorem. For presentational purposes, we also put the natural game intuition, rather than the more extensional formalisation in terms of back\&forth systems, in the foreground. 
Our adaptation of bisimulation
notions for $k$-quantifier logics is also in line with the natural adaptation
of basic modal bisimulation to monotone neighbourhood logics (cf.~\cite{Pacuit2017}),  or inquisitive modal logic (cf.~\cite{CiardelliOtto21}).
\begin{definition}[$\kl$-bisimulation]\label{definition:bisimulation-game}
	There are two variants of the \emph{$\kl$-bisimulation game}
        -- the unbounded game $G^{\quantifiers}$ and the $q$-round game $G_q^{\quantifiers}$. Both games are played by two players \player{1} and \player{2} over two $\signature$-structures $\str{A}$ and $\str{B}$. Positions in the game are
	pairs of $k$-pointed $\signature$-structures
	$(\str{A}, \alpha; \str{B}, \beta)$ with $\alpha \in A^k$ and $\beta \in B^k$.
        
	Player~\player{2} loses in any position $(\str{A}, \alpha; \str{B}, \beta)$ that violates \emph{atom equivalence}, i.e.\ in which $(\str{A}, \alpha)$ and $(\str{B}, \beta)$ can be distinguished by a quantifier-free formula. 
	\par A round, from position $(\str{A}, \alpha; \str{B},
        \beta)$, is played as follows:
	\begin{itemize}
		\item \player{1} picks one of the two pointed structures, say $(\str{A}, \alpha)$, a quantifier $\quantifier \in \quantifiers_{\signature}$, and a witness set $s \in \quantifier(\str{A}, \alpha)$.
		\item \player{2} must respond with a witness set $t \in \quantifier(\str{B}, \beta)$.
		\item \player{1} must pick some assignment $\delta \in t$.
		\item \player{2} must respond with some assignment $\gamma \in s$.
		\item The new position in the game is $(\str{A}, \gamma; \str{B}, \delta)$,
	\end{itemize}
	and either player loses during this round when stuck for a required response. 
\par We say that \player{2} wins
a play of $G^{\quantifiers}$ if \player{1} loses or if the
play continues indefinitely without loss for \player{2}.
Similarly, \player{2} wins in $G_q^{\quantifiers}$ if player
\player{1} loses or neither player loses during all $q$ rounds of the
game.
\par We say that the pointed structures $(\str{A}, \alpha)$ and
$(\str{B}, \beta)$ are \emph{$\kl$-bisimilar}, denoted
$(\str{A}, \alpha) \sim_{\kl} (\str{B}, \beta)$, if
\player{2} has a winning strategy for
$G^{\quantifiers}$ with initial configuration $(\str{A}, \alpha; \str{B}, \beta)$.
	\par Similarly, we say that the $k$-pointed structures are
        \emph{$\kl_q$-bisimilar} for some $q \in \N$, denoted
        $(\str{A}, \alpha) \sim_{\kl}^q (\str{B}, \beta)$, if
        \player{2}
has a winning strategy for
$G_q^{\quantifiers}$ from this initial position.
\end{definition}

As the $\kl$-bisimulation game is a Borel game, it is determined~\cite{Martin1975}
in the sense that
any position of the game admits a winning strategy for precisely one of the two players.

\section{An Ehrenfeucht--Fraïss\'e Theorem}
\label{section:EF}

The Ehrenfeucht--Fra\"\i ss\'e theorem for first-order logic $\FO$
essentially states a correspondence between indistinguishability by
formulae up to a given quantifier-rank $q$ and a winning strategy for \player{2} in the $q$-round $\FO$ pebble game. The  theorem carries over to basic modal logic and, as we will see in this section, also to k-quantifier logics with the corresponding notion of $\kl_q$-bisimilarity.

\begin{theorem}[Ehrenfeucht--Fra\"\i ss\'e theorem for $\kl$] \label{theorem:ehrenfeucht-fraisse}
	Let $\signature$ be a finite signature
	and $\LL :=\kl$ a $k$-quantifier logic with finitely many quantifiers in $\quantifiers_{\signature}$. For any $q \in \N$ and pair of pointed $\signature$-structures $(\str{A}, \alpha)$, $(\str{B}, \beta)$, the following are equivalent:
	\begin{enumerate}[label=(\roman*)]
		\item\label{theorem:ehrenfeucht-friasse:i} $(\str{A}, \alpha) \sim_{\LL}^q (\str{B}, \beta)$,
		\item\label{theorem:ehrenfeucht-friasse:ii}
                   $(\str{A}, \alpha) \equiv_\LL^q (\str{B}, \beta)$.
	\end{enumerate}
\end{theorem}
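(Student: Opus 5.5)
We prove both directions by induction on $q$, simultaneously for all pairs of $k$-pointed $\signature$-structures. The finiteness assumptions enter only through the standard fact that, for finite $\signature$ and finitely many quantifiers in $\quantifiers_{\signature}$, there are only finitely many formulae of quantifier rank at most $q$ up to logical equivalence. This follows by a routine induction on $q$: for $q=0$ there are finitely many atoms over the variables $\mathcal{X}_k$, hence finitely many boolean combinations up to equivalence. At rank $q+1$ we take boolean combinations of finitely many atoms together with $\quantifier\psi$, where $\quantifier$ ranges over finitely many quantifiers and $\psi$ over finitely many representatives of rank $\le q$. Consequently every $\Th[q]{\str{A},\alpha}$ is axiomatised by a single formula $\chi^q_{\str{A},\alpha}$ of rank $\le q$, the conjunction of representatives of its members. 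This is the \emph{characteristic formula}, and $\str{B},\beta\models\chi^q_{\str{A},\alpha}$ iff $(\str{B},\beta)\equiv^q_\LL(\str{A},\alpha)$.

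\emph{(i)$\Rightarrow$(ii).} We show by induction on formulae $\phi$ with $\qr(\phi)\le q$ that a winning strategy for \player{2} in $G_q^{\quantifiers}$ forces agreement on $\phi$. Atoms are covered by atom equivalence, and booleans are trivial. For $\phi=\quantifier\psi$ with $\qr(\psi)\le q-1$, suppose $\str{A},\alpha\models\quantifier\psi$, witnessed by $s\in\quantifier(\str{A},\alpha)$ with $\str{A},s\models\psi$. We let \player{1} play $\quantifier$ and $s$ in $\str{A}$; \player{2}'s strategy answers with some $t\in\quantifier(\str{B},\beta)$. For every $\delta\in t$, \player{1} may choose $\delta$, and the strategy returns some $\gamma\in s$ such that \player{2} still wins the remaining $q-1$ rounds from $(\str{A},\gamma;\str{B},\delta)$. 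By the induction hypothesis and $\str{A},\gamma\models\psi$, we get $\str{B},\delta\models\psi$. Hence $\str{B},t\models\psi$, and so $\str{B},\beta\models\quantifier\psi$. The converse direction is symmetric. None of this uses finiteness.

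\emph{(ii)$\Rightarrow$(i).} We use induction on $q$, with the base case given by atom equivalence (rank $0$ formulae include all atoms). Assume $(\str{A},\alpha)\equiv^{q+1}(\str{B},\beta)$, and let \player{1} choose, say, $\quantifier$ and $s\in\quantifier(\str{A},\alpha)$. We set
\[\theta\defas\bigvee_{\gamma\in s}\chi^q_{\str{A},\gamma}.\]
This disjunction is finite up to equivalence by the finiteness fact above, so $\theta$ has rank $\le q$. Since $\str{A},s\models\theta$, we have $\str{A},\alpha\models\quantifier\theta$, and $\qr(\quantifier\theta)\le q+1$ gives $\str{B},\beta\models\quantifier\theta$. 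This yields $t\in\quantifier(\str{B},\beta)$ with $\str{B},t\models\theta$, and \player{2} plays $t$. For any $\delta\in t$ that \player{1} picks, some disjunct holds, i.e.\ $\str{B},\delta\models\chi^q_{\str{A},\gamma}$ for some $\gamma\in s$. \player{2} answers with this $\gamma$, and then $(\str{A},\gamma)\equiv^q(\str{B},\delta)$. By induction \player{2} wins the remaining $q$ rounds. The case where \player{1} plays in $\str{B}$ is symmetric.

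The main obstacle is the \emph{finiteness up to equivalence} of rank-$q$ formulae, which makes $\theta$ a genuine formula even when $s$ is infinite. This is where both hypotheses of the theorem are needed. The remaining steps are routine. One detail to check is that a player being stuck matches the semantics. If $\quantifier(\str{B},\beta)=\emptyset$ then $\str{B},\beta\not\models\quantifier\ltrue$, which is consistent with the equivalence. If $s=\emptyset$, then $\theta$ is $\lfalse$, and \player{2} needs some $t$ on which $\lfalse$ holds, i.e.\ $t=\emptyset$; the formula $\quantifier\lfalse$ guarantees that such a $t$ exists.
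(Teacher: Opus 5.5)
Your proof is correct and follows the paper's argument essentially step for step. For (i)$\Rightarrow$(ii) you use syntactic induction driven by \player{2}'s strategy, and for (ii)$\Rightarrow$(i) you use the formula $\quantifier\bigvee_{\gamma\in s}\chi^q_{\str{A},\gamma}$, which is the paper's $\quantifier\bigvee_{\gamma\in s}\bigwedge\Th[q-1]{\str{A},\gamma}$ up to an index shift, made finite by the finite-index fact; your explicit finiteness induction and your treatment of the degenerate cases (empty witness sets, handled via $\quantifier\lfalse$ and $\quantifier\ltrue$) fill in details the paper leaves implicit.
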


The proof of the theorem is a simple adaptation of the classical Ehrenfeucht--Fra\"\i ss\'e argument.

\begin{proof}[Proof by induction on $q$.]
	The base case for $q = 0$ is clear.
It remains to show the induction step. Let $q > 0$.
\par We show the implication
        \mbox{\enquote{\ref{theorem:ehrenfeucht-friasse:i} $\implies$
            \ref{theorem:ehrenfeucht-friasse:ii}}} by syntactic
        induction.
As the claim is preserved under boolean connectives, we just need to consider
$\phi = \quantifier \psi$ with $\qr(\phi) = q$.  Supposing that, e.g.\ 
$\str{A}, \alpha \models \phi$, we need to show that also $\str{B}, \beta \models \phi$. Let $s \in \quantifier(\alpha)$ be a witness for $\str{A}, \alpha \models \quantifier \psi$ and let \player{1} pick $s$ as challenge in the $\kl$-bisimulation game.
\par $\sim_\LL^q$ guarantees that \player{2} has  
        a response $t \in \quantifier(b)$ in the bisimulation game such that
	every $\delta \in t$ is $\kl_{q - 1}$-bisimilar to some $\gamma_{\delta} \in s$. By the induction hypothesis, $\str{B}, \delta \models \psi$ iff $\str{A}, \gamma_{\delta} \models \psi$ as $\qr(\psi) = q - 1$. As $s$ witnesses $\quantifier \psi$ also $\str{B}, \delta \models \psi$ for all $\delta \in t$, ensuring that $t$ is a witness for $\str{B}, \beta \models \quantifier \psi$.
	\paragraph{} For the converse implication \enquote{\ref{theorem:ehrenfeucht-friasse:ii} $\implies$ \ref{theorem:ehrenfeucht-friasse:i}}, suppose $\Th[q]{\str{A}, \alpha} = \Th[q]{\str{B}, \beta}$. W.l.o.g.\ assume \player{1} picks some $s \in \quantifier(\alpha)$. We need to find a suitable response for \player{2}.
	\par Since there are only finitely many relations, variables and quantifiers, the set of formulae up to quantifier rank $q$ is finite up to logical equivalence. In particular, the conjunctions and disjunction in
	\begin{align*}
		\chi & \defas \quantifier \bigvee_{\gamma \in s} \bigwedge \Th[q - 1]{\str{A}, \gamma}
	\end{align*}
	are effectively finite. By construction, $\str{A}, \alpha \models \chi$ is witnessed by~$s$. Since ${\Th[q]{\str{A}, \alpha} = \Th[q]{\str{B}, \beta}}$, also $\str{B}, \beta \models \chi$. Take a witness ${t \in \quantifier_{\str{B}}(\beta)}$ for $\chi$. Let $t$ be \player{2}'s response in the game and $\delta \in t$ be \player{1}'s challenge. By choice of the witness $t$, $\str{B}, \delta \models \bigvee_{\gamma \in s} \bigwedge \Th[q - 1]{\str{A}, \gamma}$. Thus, there exists some $\gamma \in s$ with $\str{B}, \delta \models \Th[q - 1]{\str{A}, \gamma}$ for \player{2} to play.\qedhere
\end{proof}

Note that the implication \enquote{\ref{theorem:ehrenfeucht-friasse:i}
  $\implies$ \ref{theorem:ehrenfeucht-friasse:ii}} even holds for
infinitely many relations and quantifiers, while the finiteness
conditions are crucial for the converse direction due to the following observation that is easily shown by induction on~$q$.

\begin{observation}\label{observation:finite-index}
	Let $\LL := \kl$ be a $k$-quantifier logic with finite
        $\quantifiers$ and $\signature$ a finite signature. Then the
        equivalence relations $\equiv_{\LL}^q$, $\sim_\LL^q$ are of finite index.
\end{observation}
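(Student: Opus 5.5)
The plan is a simultaneous induction on $q$, proving for both $\equiv_{\LL}^q$ and $\sim_{\LL}^q$ that they have finite index. For $\equiv_{\LL}^q$ the real content is that, up to logical equivalence, there are only finitely many formulae of quantifier rank at most $q$. The index is then bounded by $2^{N_q}$, where $N_q$ is that number of equivalence classes.

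For $q = 0$, a quantifier-free formula is a boolean combination of atoms $R(y_1, \dots, y_l)$ and $\ltrue$. Since $\signature$ is finite and the variables range over $\mathcal{X}_k$, there are finitely many such atoms, say $m$, so there are at most $2^{2^m}$ inequivalent boolean combinations.

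For the step $q \to q+1$, let $\Phi_q$ be a finite set of representatives of rank-$\leq q$ formulae. Every rank-$\leq q+1$ formula is a boolean combination of atoms and formulae $\quantifier \psi$ with $\qr(\psi) \leq q$. By the induction hypothesis $\psi$ is equivalent to some $\psi' \in \Phi_q$. I would check by a one-line induction that $\quantifier \psi \equiv \quantifier \psi'$: the semantics of $\quantifier\psi$ depends only on the set $\set{\gamma}[\str{A},\gamma\models\psi]$, since a witness $s$ works iff $s$ is contained in it. As $\quantifiers$ is finite, there are finitely many $\quantifier\psi'$, so finitely many generators and finitely many boolean combinations.

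For $\sim_{\LL}^q$, I would first note that it refines $\equiv^q_{\LL}$, by the direction (i)$\Rightarrow$(ii) of Theorem~\ref{theorem:ehrenfeucht-fraisse}, which holds without finiteness assumptions. Conversely, the proof of (ii)$\Rightarrow$(i) only used that $\equiv^{q-1}_{\LL}$ has finite index, which is exactly the part already established at the lower level. So $\sim^q_{\LL}$ coincides with $\equiv^q_{\LL}$ and has finite index too. To avoid circularity, I would phrase this as: first prove finiteness for $\equiv^q_{\LL}$ for all $q$, then deduce $\sim^q_{\LL} = \equiv^q_{\LL}$ from the theorem.

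The only subtle point is the congruence step, namely replacing $\psi$ by an equivalent $\psi'$ under $\quantifier$. It holds because equivalence means agreement on every $k$-pointed structure, so the sets of satisfying tuples, and hence the available witnesses, coincide.
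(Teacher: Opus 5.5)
Your proposal is correct and follows the route the paper intends; the paper gives no details beyond saying the observation is easily shown by induction on $q$, and your count of rank-$\leq q$ formulae up to logical equivalence, including the congruence step $\psi \equiv \psi' \Rightarrow \quantifier\psi \equiv \quantifier\psi'$, is exactly the finiteness fact that the proof of Theorem~\ref{theorem:ehrenfeucht-fraisse} invokes without proof. Transferring finite index to $\sim^q_{\LL}$ through that theorem is legitimate, and your ordering (first $\equiv^q_{\LL}$ for all $q$, then $\sim^q_{\LL} = \equiv^q_{\LL}$) avoids circularity, because the converse direction of the theorem at level $q$ only uses the formula count at level $q-1$.
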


\begin{corollary}[$\kl$-bisimulation invariance]\label{corollary:bisimulation-invariance}
	For a $k$-quantifier logic $\kl$ and signature $\signature$, let $(\str{A}, \alpha)$ and $(\str{B}, \beta)$ be $\kl$-bisimilar $k$-pointed $\signature$-structures. Then
	\begin{align*}
		\str{A}, \alpha \equiv_{\kl} \str{B}, \beta, \; \mbox{ or, equivalently, } \;
		\Th{\str{A}, \alpha} = \Th{\str{B}, \beta}
		.\end{align*}
	\begin{proof}
		This follows immediately from Theorem~\ref{theorem:ehrenfeucht-fraisse} as $\kl$-bisimilarity implies $\kl_q$-bisimilarity for all $q$, and as each individual  $\kl$-formula refers to just finitely many relation symbols and quantifiers, and is of finite quantifier rank. 
	\end{proof}
\end{corollary}

\paragraph{} The \emph{characteristic formulae} $\chi_{\str{A},
  \alpha}^q$, for $k$-pointed $\signature$-structures $(\str{A},
\alpha)$ and quan\-ti\-fi\-er-rank $q$, are built up inductively so as to capture the challenge-response requirements
that enable player~\player{2} to stay abreast of player~\player{1} for so many
further rounds. These characteristic formulae are such that 
\begin{equation}
	\label{chareq}
	\str{B}, \beta \models \chi_{\str{A}, \alpha}^q
	\quad \Longleftrightarrow\quad
        (\str{A}, \alpha)  \sim_{\kl}^q (\str{B}, \beta).
\end{equation}

\par The quantifier-rank~$0$ formula $\chi_{\str{A}, \alpha}^0$ just
specifies the atomic type of the $k$-assignment $\alpha$ in $\str{A}$ 
(by a finite conjunction of relational atoms and negated atoms), 
as a guarantee of atom equivalence on the righthand side of~(\ref{chareq}). 

For $q > 0$, a characteristic formula is the conjunction of
\begin{itemize}
	\item the quantifier-rank $0$ formula $\chi_{\str{A}, \alpha}^0$ in order to ensure
	      atom equivalence in~(\ref{chareq}), 
        \item conditions ensuring the existence of 
         \emph{forth responses}
         for \player{2} to all potential challenges by \player{1} played in $\str{A}$, and
	\item conditions ensuring the existence of \emph{back responses}
for \player{2} to all potential challenges by \player{1} played in $\str{B}$
	      in the context of~(\ref{chareq}).
\end{itemize}

Thus, the characteristic formula for quantifier-rank $q + 1$ is
\begin{align*}
	\chi_{\str{A}, \alpha}^{q + 1} & \defas \chi_{\str{A}, \alpha}^{0} \land \bigwedge_{\quantifier \in \quantifiers_{\signature}} \underbrace{ \Bigl( \bigwedge_{s \in \quantifier(\alpha)} \quantifier \bigvee_{\gamma \in s} \chi_{\str{A}, \gamma}^{q} \Bigr) }_{\text{\enquote{forth responses}}} \\
	                               & \qquad \land \underbrace{ \bigwedge \set{\lnot \quantifier \bigvee \Phi}[\Phi \subset \Delta_q, \str{A}, \alpha \models \neg Q \bigvee \Phi] }_{\text{\enquote{back responses}}}
\end{align*}
where $\Delta_q \defas \{ \chi_{\str{B},\beta}^q \mid (\str{B},\beta) \text{
	$k$-pointed $\signature$-structure} \}$ is the set of all characteristic
formulae at level~$q$.

\begin{proposition}[characteristic formulae]\label{proposition:characteristic-formulae}
	For a finite signature $\signature$, k-quantifier logic $\kl$ with finite $\quantifiers_{\signature}$, and $k$-pointed $\signature$-structures $(\str{A}, \alpha)$ and $(\str{B}, \beta)$,
	\begin{align*}
		\str{B}, \beta \models \chi_{\str{A}, \alpha}^q
		\quad \iff\quad (\str{B}, \beta)  \sim_{\kl}^q (\str{A}, \alpha)
		.\end{align*}
\end{proposition}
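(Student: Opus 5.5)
The plan is an induction on $q$, carried out simultaneously for all pairs of $k$-pointed $\signature$-structures. Before starting, I need the characteristic formulae to be genuine finite $\kl$-formulae. By Observation~\ref{observation:finite-index}, for each $q$ the set $\Delta_q$ is finite up to logical equivalence, since each $\chi^q$ determines a $\sim^q_{\kl}$-class. So I will assume inductively that $\Delta_q$ is a finite set of formulae, one representative per class. Then the inner disjunction $\bigvee_{\gamma\in s}\chi^q_{\str A,\gamma}$ ranges over a finite set. Also, for fixed $\quantifier$, the conjunction over $s\in\quantifier(\alpha)$ contains only finitely many distinct formulae, because each is $\quantifier\bigvee\Phi$ with $\Phi\subset\Delta_q$. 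The back part is finite for the same reason. Hence $\chi^{q+1}_{\str A,\alpha}$ is well defined and $\qr(\chi^{q+1})\le q+1$. The base case $q=0$ holds because $\chi^0$ is the atomic type, which is exactly atom equivalence.

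For the inductive step, assume the claim for $q$, which says $\str B,\delta\models\chi^q_{\str A,\gamma}$ iff $(\str B,\delta)\sim^q(\str A,\gamma)$.

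($\Leftarrow$) Suppose $(\str B,\beta)\sim^{q+1}(\str A,\alpha)$. Then $\chi^{q+1}$ is satisfied because $\qr(\chi^{q+1}_{\str A,\alpha})\le q+1$, $\str A,\alpha\models\chi^{q+1}_{\str A,\alpha}$, and by Theorem~\ref{theorem:ehrenfeucht-fraisse} ($\sim^{q+1}\Rightarrow\equiv^{q+1}$, a direction that needs no finiteness) it transfers to $\str B,\beta$. To see $\str A,\alpha\models\chi^{q+1}_{\str A,\alpha}$: the forth conjuncts hold with witness $s$ itself, using the induction hypothesis and reflexivity of $\sim^q$. The back conjuncts hold by their very definition.

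($\Rightarrow$) Suppose $\str B,\beta\models\chi^{q+1}_{\str A,\alpha}$. Atom equivalence comes from $\chi^0$. I then play the first round and afterwards use the induction hypothesis for the remaining $q$ rounds.

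\begin{itemize}
\item \emph{Challenge in $\str A$ with $s\in\quantifier(\alpha)$.} The forth conjunct gives some $t\in\quantifier(\beta)$ such that every $\delta\in t$ satisfies $\chi^q_{\str A,\gamma}$ for some $\gamma\in s$. By the induction hypothesis, $(\str B,\delta)\sim^q(\str A,\gamma)$, so \player{2} answers $t$ and then $\gamma$.
\item \emph{Challenge in $\str B$ with $t\in\quantifier(\beta)$.} Let $\Phi\defas\set{\chi^q_{\str B,\delta}}[\delta\in t]\subset\Delta_q$. Then $\str B,\beta\models\quantifier\bigvee\Phi$, witnessed by $t$. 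If $\str A,\alpha\models\neg\quantifier\bigvee\Phi$, then $\neg\quantifier\bigvee\Phi$ would be a back conjunct, contradicting the assumption. Hence some $s\in\quantifier(\alpha)$ has every $\gamma\in s$ satisfying $\chi^q_{\str B,\delta}$ for some $\delta\in t$, and the induction hypothesis (applied with the roles of the structures swapped) gives \player{2}'s response.
\end{itemize}

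The main obstacle is the finiteness bookkeeping: one must check that the conjunction over all $s\in\quantifier(\alpha)$, possibly infinitely many, and the back conjunction over subsets of $\Delta_q$ collapse to finitely many formulae. This rests on fixing representatives in $\Delta_q$, with finite index supplied by Observation~\ref{observation:finite-index}. A second point to keep consistent is that $\sim^q$ is symmetric and reflexive, which the argument uses when swapping the roles of $\str A$ and $\str B$.
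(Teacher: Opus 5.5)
Your proof is correct and follows the paper's argument. The converse direction uses $\str A,\alpha\models\chi^{q}_{\str A,\alpha}$ together with the easy direction of Theorem~\ref{theorem:ehrenfeucht-fraisse}, and the forward direction is an induction using the forth conjuncts for challenges in $\str A$ and the back conjuncts for challenges in $\str B$. You also spell out two points the paper leaves implicit: that $\chi^{q+1}$ is a finite formula, and why $\str A,\alpha\models\chi^{q+1}_{\str A,\alpha}$.
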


\begin{proof}
We only show the forward implication as the converse direction is obvious by
Theorem~\ref{theorem:ehrenfeucht-fraisse} since
$\str{A},\alpha \models \chi_{\str{A}, \alpha}^q$. 
Our strategy is to show by induction on~$q$ that $\str{B},\beta \models
		\chi_{\str{A},\alpha}^q$ implies that \player{2} has a winning strategy for $q$ rounds in the
	game from $(\str{A},\alpha;\str{B},\beta)$. This is obvious for $q=0$ (the
	base case).
	\par Assuming $\str{B},\beta \models
		\chi_{\str{A},\alpha}^{q+1}$, first consider a first round in which
	\player{1} first chooses some $s \in \quantifier(\str{A},\alpha)$. The
	\emph{forth}-part in $\chi_{\str{A},\alpha}^{q+1}$ ensures that $\str{B},\beta \models
		\quantifier \bigvee_{\gamma \in s} \chi_{\str{A},\gamma}^q$. Thus, we can let
	\player{2} pick a witness $t \in \quantifier(\str{B},\beta)$ such
	that $\str{B},t \models  \bigvee_{\gamma \in s} \chi_{\str{A},\gamma}^q$,
	which clearly allows \player{2} a suitable response to any challenge
	$d \in t$ that \player{1} may select.
	\par If \player{1} first chooses some $t \in \quantifier(\str{B},\beta)$, on
	the other hand, we look to the
	\emph{back}-part in $\chi_{\str{A},\alpha}^{q+1}$:
	let $\Phi := \{ \chi_{\str{B},\delta}^q \mid \delta \in t \}$ so that
	$\str{B},\beta \models \quantifier \bigvee \Phi$; it follows that
	$\str{A},\alpha \models \quantifier \bigvee \Phi$, too, since otherwise 
	$\neg Q \bigvee \Phi$ would be  a conjunct of
	$\chi_{\str{A},\alpha}^{q+1}$. If \player{2} chooses a witness $s \in
		\quantifier(\str{A},\alpha)$ for  $\str{A},\alpha \models \quantifier \bigvee
		\Phi$, then \player{2} has safe responses to challenges by \player{1}
	in the second phase of this round. 
\end{proof}

The existence of characteristic formulae directly implies a normal form according to
\begin{align*}
	\phi \equiv \bigvee \bigl\{   \chi_{\str{A}, \alpha}^{\qr(\phi)} \mid
	\str{A}, \alpha \models \phi \bigr\}.
\end{align*}
Note that the disjunction on the right is finite up to logical equivalence due to Observation~\ref{observation:finite-index}.

\section{A Hennessy--Milner Theorem}
\label{section:HMthm}
We want to find sufficient conditions for the converse of
the implication in Corollary~\ref{corollary:bisimulation-invariance}. 
The classical Hennessy--Milner theorem for basic modal logic $\ML$ states that 
$\ML$-equivalence between pointed Kripke structures implies their bisimilarity 
if the underlying frames are finitely branching; this can be substantially generalised
to the class of all $\ML$-saturated structures~(cf.~\cite{goranko2007}).
Furthermore, the proof has been adapted to neighbourhood semantics in~{\cite[Chapter~4]{Hansen09}}.
\par We here adapt this result to $k$-quantifier logics by showing that
$\kl$-equivalent $k$-pointed $\signature$-structures that are 
$\kl$-saturated (as in the following definition)
are $\kl$-bisimilar.

\begin{definition}[$\quantifier$-type]\label{definition:type}
	Let $\kl$ be a k-quantifier logic and $\str{A}$ a
        \mbox{$\signature$-structure}. For every $\quantifier \in
        \quantifiers$ and $\alpha \in A^k$, we call $\Psi \subset
        \kl(\signature)$ a (partial) \emph{$\quantifier$-type} of
        $(\str{A}, \alpha)$ if ${\str{A}, \alpha \models \quantifier
          \bigwedge \Psi_0}$ for all finite $\Psi_0 \subset \Psi$. We
        say that $\Psi$ is \emph{realised} by
        $s \in \quantifier(\alpha)$
        if $\str{A}, s \models \Psi$ (with flat semantics as introduced in Definition~\ref{definition:k-logics}).
	\par Similarly, $\Psi \subset \kl(\signature)$ is an \emph{$s$-type} of a given $s \in \quantifier(\alpha)$ if for all finite $\Psi_0 \subset \Psi$ there is a $\gamma \in s$ with $\str{A}, \gamma \models \Psi_0$. We say that $\Psi$ is \emph{realised} by $\gamma \in s$ if $\str{A}, \gamma \models \Psi$.
	\par We call $\str{A}$ \emph{$\kl$-saturated} if for all
        $\quantifier \in \quantifiers$ and $\alpha \in A^k$ every $\quantifier$-type of $(\str{A},
        \alpha)$ and every $s$-type of every $s \in Q(\alpha)$ is realised.
\end{definition}

We postpone the construction of such saturated structures to 
Section~\ref{section:lindstroem}, where we find saturated extensions for $k$-quantifier logics that are compatible with ultrapowers (Lemma~\ref{lemma:finding-saturated-extensions}).

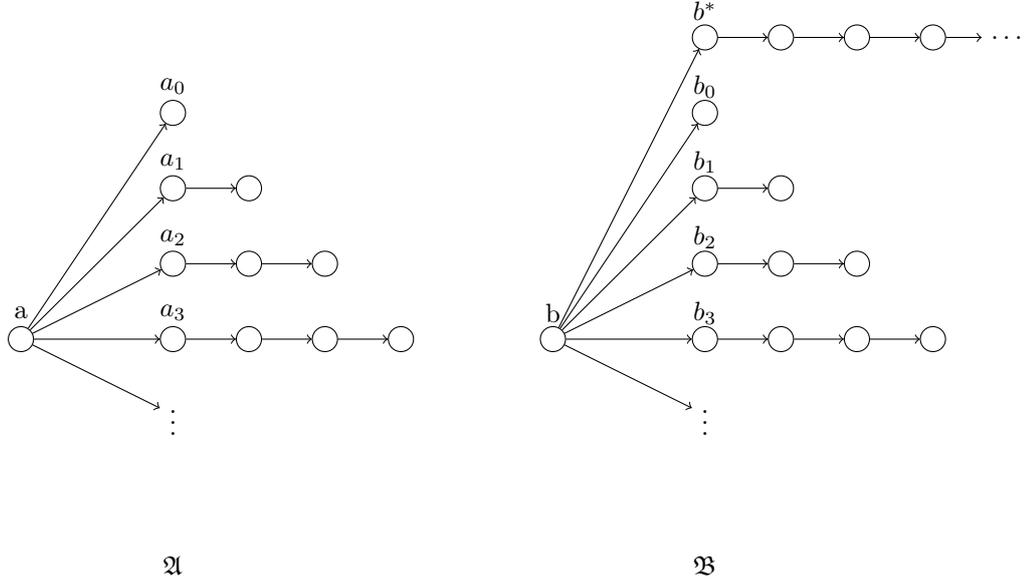
\begin{figure}
	\centering
	\begin{tikzpicture}
		\begin{scope}[every node/.style={circle,draw}]
			\node[draw=none] at (0, -1 + 0.35) {a};
			\node (a) at (0, -1) {};

			\foreach \n in {0,...,3}{
				\node[draw=none] at (2, 2 - \n + 0.35) {$a_{\n}$};
				\node (a\n0) at (2, 2 - \n) {};
				\path [->] (a) edge (a\n0);
				}
			\foreach \n in {1,...,3}{
					\foreach \m in {1,...,\n}{
							\node(a\n\m) at (2 + \m, 2 - \n) {};
							\pgfmathtruncatemacro{\mm}{\m - 1}%
							\path [->] (a\n\mm) edge (a\n\m);
						}
				}
		\end{scope}
		\node (an) at (2, 2 - 4) {\vdots};
		\path [->] (a) edge (an);
		
		\node[] at (2, 2 - 4 - 2) {$\str{A}$};
		
		\begin{scope}[every node/.style={circle,draw}]
			\node[draw=none] at (7, -1 + 0.35) {b};
			\node (b) at (7, -1) {};
			
			\node[draw=none] at (7 + 2, -3 + 0.35 + 6) {$b^{\ast}$};
			\node (bs0) at (7 + 2, -3 + 6) {};

			\path [->] (b) edge (bs0);
			\foreach \m in {1,...,3}{
					\node(bs\m) at (7 + 2 + \m, -3 + 6) {};
					\pgfmathtruncatemacro{\mm}{\m - 1}%
					\path [->] (bs\mm) edge (bs\m);
				}
			
			\foreach \n in {0,...,3}{
				        \node[draw=none] at (7 + 2, 2 - \n + 0.35) {$b_{\n}$};
					\node (b\n0) at (7 + 2, 2 - \n) {};
					\path [->] (b) edge (b\n0);
				}
			\foreach \n in {1,...,3}{
					\foreach \m in {1,...,\n}{
							\node(b\n\m) at (7 + 2 + \m, 2 - \n) {};
							\pgfmathtruncatemacro{\mm}{\m - 1}%
							\path [->] (b\n\mm) edge (b\n\m);
						}
				}
		\end{scope}
		\node (bn) at (7 + 2, 2 - 4) {\vdots};
		\path [->] (b) edge (bn);
		
		\node (bsn) at (7 + 2 + 4, -3 + 6) {\dots};
		\path [->] (bs3) edge (bsn);
		
		\node[] at (7 + 2, 2 - 4 - 2) {$\str{B}$};
	\end{tikzpicture}
	\caption{The structures $(\str{A}, a)$ and $(\str{B}, b)$ have the same $\ML$-theory, but they are not $\ML$-bisimilar.}
	\label{figure:hm-counterexample}
\end{figure}

\paragraph{} To better understand the importance of saturation, consider
the standard example of the two pointed Kripke structures 
in Figure~\ref{figure:hm-counterexample}. They have the same theory w.r.t.\ basic modal logic $\ML \equiv \kl[\set{\Diamond}]$ as \player{2} can win any finite $\kl[\set{\Diamond}]$-bisimulation game; but they
are not $\kl[\set{\Diamond}]$-bisimilar, as \player{1} may choose the witness
$\set{b^{\ast}}$ and is thus able to continue picking successors indefinitely, while \player{2} gets stuck after $n + 1$ rounds if $\set{a_n}$ was selected as the first response.
\par The issue appears to be that $\str{B}$ possesses a richer internal structure than $\str{A}$, and indeed, $\str{B}$ is $\kl[\set{\Diamond}]$-saturated, while $\str{A}$ is not.
Such imbalance cannot occur between two $\kl[\set{\Diamond}]$-saturated structures, and this
phenomenon naturally also adapts to the scenario of $k$-quantifier logics as shown in Theorem~\ref{theorem:hennessy-milner} below.

\par Note that the modal case is special as it just requires
saturation w.r.t.\ $\Diamond$-types since $s$-types are always
realised due to them being essentially (supersets of) singleton sets.
Hennessy--Milner results for neighbourhood logic and 
for inquisitive modal logic, which are closer in
spirit to the present more general setting, are treated in~\cite{Hansen09}
and~\cite{MeissnerOtto},
respectively.

\begin{theorem}[a Hennessy--Milner Theorem]\label{theorem:hennessy-milner}
Let $\kl$ be a k-quantifier logic and $(\str{A}, \alpha)$, $(\str{B}, \beta)$ 
a pair of pointed
	$\signature$-structures. If $(\str{A}, \alpha)$ and $(\str{B}, \beta)$ are
	$\kl$-saturated then
	\begin{align*}
		(\str{A}, \alpha) \equiv_{\kl} (\str{B}, \beta) \quad & \implies \quad (\str{A}, \alpha) \sim_{\kl} (\str{B}, \beta)
.\end{align*}
\end{theorem}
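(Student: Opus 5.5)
We show that $\kl$-elementary equivalence is itself a winning invariant for \player{2} in the unbounded game $G^{\quantifiers}$ between saturated structures. Concretely, let
\[
Z \defas \set{(\gamma,\delta) \in A^k \times B^k}[(\str{A},\gamma) \equiv_{\kl} (\str{B},\delta)].
\]
\player{2} plays so as to keep the current position in $Z$. Since $(\alpha,\beta) \in Z$ by assumption, and every position in $Z$ is atom equivalent (atomic formulae are $\kl$-formulae), it suffices to show that from any position in $Z$, \player{2} can survive one round and land in $Z$ again. Then \player{2} never gets stuck and never violates atom equivalence, so she wins $G^{\quantifiers}$. By symmetry of the definition of saturation and of $Z$, it is enough to handle a challenge $s \in \quantifier(\str{A},\gamma)$ played in $\str{A}$.

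The response $t$ is obtained from a $\quantifier$-type in $\str{B}$. Put $\Psi \defas \Th{\str{A}, s}$, the flat theory of $s$. For every finite $\Psi_0 \subset \Psi$ we have $\str{A}, s \models \bigwedge \Psi_0$, so $s$ witnesses $\str{A},\gamma \models \quantifier \bigwedge \Psi_0$. This is a $\kl(\signature)$-formula, because $\quantifier \in \quantifiers_{\signature}$. By $\kl$-equivalence, $\str{B},\delta \models \quantifier \bigwedge\Psi_0$ for every finite $\Psi_0$, so $\Psi$ is a $\quantifier$-type of $(\str{B},\delta)$. Saturation of $\str{B}$ yields a realising $t \in \quantifier(\str{B},\delta)$ with $\str{B}, t \models \Psi$, and \player{2} plays this $t$.

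The reply to \player{1}'s choice $\delta' \in t$ comes from an $s$-type in $\str{A}$. Put $\Theta \defas \Th{\str{B},\delta'}$. We claim that $\Theta$ is an $s$-type of $s$ in $\str{A}$. Suppose not: then some finite $\Theta_0 \subset \Theta$ has no $\gamma' \in s$ with $\str{A},\gamma' \models \bigwedge\Theta_0$. Hence $\str{A}, s \models \lnot\bigwedge\Theta_0$, so $\lnot\bigwedge\Theta_0 \in \Psi$, and therefore $\str{B},t \models \lnot\bigwedge\Theta_0$. Since $\delta' \in t$, this contradicts $\str{B},\delta' \models \bigwedge\Theta_0$. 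Saturation of $\str{A}$ applied to the $s$-type $\Theta$ of $s \in \quantifier(\gamma)$ gives $\gamma' \in s$ with $\str{A},\gamma' \models \Theta$. Since $\Theta$ is a complete theory and $\kl(\signature)$ is closed under negation, $(\str{A},\gamma') \equiv_{\kl} (\str{B},\delta')$, that is, $(\gamma',\delta') \in Z$. \player{2} plays $\gamma'$.

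The step requiring care is the interplay of the two saturation notions. The flat theory $\Psi$ must be used, rather than just the theory of $\gamma$, so that the realising $t$ is controlled well enough to pass the $s$-type test. In addition, saturation has to be available at every position reached, not only at the initial one. We therefore read the hypothesis ``$(\str{A},\alpha)$, $(\str{B},\beta)$ are $\kl$-saturated'' as saturation of the structures $\str{A}$ and $\str{B}$ in the sense of Definition~\ref{definition:type}, which quantifies over all $\alpha \in A^k$ and all quantifiers. Under that reading the invariant argument goes through uniformly, with no finiteness assumption on $\signature$ or $\quantifiers$, because only single formulae and their finite conjunctions are ever used.
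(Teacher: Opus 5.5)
Your proof is correct and has the same skeleton as the paper's proof. $\kl$-equivalence is the invariant. Saturation of $\str{B}$ for $\quantifier$-types produces the response $t$, and saturation of $\str{A}$ for $s$-types, applied to $\Th{\str{B},\delta'}$, produces the reply in $s$.

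The difference lies in which $\quantifier$-type you feed to saturation. The paper takes $\Psi$ to consist of the formulae $\bigvee_{\gamma \in s} \chi^q_{\str{A},\gamma}$, formed in every finite fragment $\kl[\quantifiers_0](\signature_0)$ and for every $q$. To make these legitimate formulae, and to check that $\Th{\str{B},\delta'}$ is an $s$-type, it needs several ingredients:
\begin{itemize}
\item Proposition~\ref{proposition:characteristic-formulae};
\item Theorem~\ref{theorem:ehrenfeucht-fraisse};
\item finiteness up to equivalence within each finite fragment.
\end{itemize}
You instead take the whole flat theory $\Th{\str{A},s}$. The $s$-type property then follows from a one-line contradiction that uses only closure of $\kl(\signature)$ under $\lnot$ and finite $\land$.

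The two choices are in fact realised by exactly the same witnesses $t$. Every formula of $\Th{\str{A},s}$ lies in some finite fragment with rank at most some $q$, and is therefore implied by the corresponding disjunction of characteristic formulae; so the paper's $\Psi$ axiomatises your flat theory. Your route is thus more elementary and self-contained, and works verbatim for arbitrary $\signature$ and $\quantifiers$ without passing to finite sub-fragments. The paper's version keeps the argument visibly tied to the finite-round game analysis of Section~\ref{section:EF}.

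You read the hypothesis as saturation of the structures $\str{A}$ and $\str{B}$, available at every position reached. That is also what the paper's invariant argument tacitly uses.
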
                                                              
\begin{proof}
		We show that player~\player{2} can maintain $\kl$-equivalence,
		in response to any challenges by~\player{1}, as an invariant that guarantees
		a win in any infinite play. 
		Let w.l.o.g.\ $s \in \quantifier(\alpha)$ be \player{1}'s challenge in the $\kl$-bisimulation game. Consider
		\begin{align*}
			\Psi & \defas \Bigl\{ \bigvee_{\gamma \in s} \bigwedge \chi_{\str{A}, \gamma}^q(\kl[\quantifiers_0](\signature_0)) \mid q \in \N, \signature_0 \subset \signature \text{ finite}, \quantifiers_0 \subset \quantifiers \text{ finite} \Bigr\}
		\end{align*}
where $\chi_{\str{A}, \gamma}^q(\kl[\quantifiers_0](\signature_0))$ is
the characteristic formula for $(\str{A}, \gamma)$ at quantifier rank
$q$ with respect to the fragment
$\kl[\quantifiers_0](\signature_0)$. Note that the conjunctions and
disjunctions in $\Psi$ are finite up to logical equivalence as we
restrict the language to finite signatures and sets of quantifiers. The
set $\Psi$ is by definition a $\quantifier$-type of $(\str{A},
\alpha)$ and thus of $(\str{B}, \beta)$ as
$(\str{A}, \alpha) \equiv_\LL (\str{B}, \beta)$. 
Since $\str{B}$ is saturated, there is a $t \in
\quantifier_{\str{B}}(\beta)$ realising $\Psi$, i.e.\ $\str{B}, \delta
\models \Psi$ for all $\delta \in t$.
		\par Let $t$ be \player{2}'s response and let $\delta \in t$ be \player{1}'s challenge for the second phase of this round. By choice of $t$, for every $q \in \N$ and finite $\signature_0 \subset \signature$, $\quantifiers_0 \subset \quantifiers$ there is some $\gamma \in s$ such that $\chi_{\str{B}, \delta}^q(\kl[\quantifiers_0](\signature_0)) = \chi_{\str{A}, \gamma}^q(\kl[\quantifiers_0](\signature_0))$. As every finite subset $\Psi_0 \subset \Th{\str{B}, \delta}$ only contains finitely many symbols and quantifiers, it is, by
                Theorem~\ref{theorem:ehrenfeucht-fraisse} and
                Proposition~\ref{proposition:characteristic-formulae}, 
a consequence of
some $\chi_{\str{B}, \delta}^q(\kl[\quantifiers_0](\signature_0))$. Therefore, the theory $\Th{\str{B}, \delta} \equiv \bigcup_{q, \signature_0, \quantifiers_0} \chi_{\str{B}, \delta}^{q}(\kl[\quantifiers_0](\signature_0))$ is an $s$-type of $\str{A}$. As $\str{A}$ is saturated, there must be  some $\gamma \in s$ with $\Th{\str{B}, \delta} = \Th{\str{A}, \gamma}$, which it is safe for \player{2} to play.
	\end{proof}

\section{The Łoś Property and First-Order Logic}\label{section:ultraproducts}
We quickly recall some definitions related to ultraproducts. For a more thorough introduction, we refer to~\cite[Chapter~9.5]{Hodges1993}.
\par A collection $\filter \subset \powset{I} \setminus \{ \emptyset \}$ of nonempty subsets is a \emph{filter} over the index set $I$ if it is closed under passage to supersets (upward closed: $\filter^\uparrow = \filter$) and under finite intersections
($J_1,J_2 \in \filter \Rightarrow J_1\cap J_2 \in \filter$, or $\filter  \cap \filter = \filter$). An \emph{ultrafilter} over $I$ is a filter $\uf$ such that, for every $J \in \powset{I}$, either $J \in \uf$ or $I\setminus J \in \uf$. The latter is a maximality condition w.r.t.\ the $\subset$-relation (over $\powset{\powset{I}}$) among filters, and an application of Zorn's lemma shows that every filter
$\filter$ can be extended to an ultrafilter $\uf \supset \filter$. An ultrafilter extending the filter $\set{\set{i}}$ for some $i \in I$ is called \emph{principal} and \emph{non-principal} otherwise. For $I = \N$, every non-principal ultrafilter contains the Fr\'echet filter, consisting of all co-finite subsets.

The direct product of a family $(\str{A}_i)_{i \in I}$ of
$\sigma$-structures  is denoted as $\prod_{i \in I} \str{A}_i$. We use boldface letters like
$\de{a}$ for its elements $\de{a} = (a_i)_{i \in I} \in \prod_{i \in I} A_i$, and extend this notational convention to tuples of elements $\de{a} =(\de{a}^{(1)}, \ldots, \de{a}^{(n)})
	\in (\prod_{i \in I} A_i)^n$, where we then also write
$\de{a}_i := (a_i^{(1)}, \ldots, a_i^{(n)})
	\in A_i^n$ for the tuple of entries in the $i$th component of the product. 
Direct products over constant families, $\str{A}_i = \str{A}$ for all $i \in I$, 
are denoted $\str{A}^I$ and addressed as direct powers.  
The \emph{$\filter$-reduced product} of the family $(\str{A}_i)$ w.r.t.\ the filter $\filter$ is obtained as a $\sigma$-structure
$\prod \str{A}_i/\filter$
over the quotient set $\prod_{i \in I}A_i/{\sim_\filter}$. The equivalence relation
\[
	\de{a} \sim_\filter \de{a}' \; \mbox{ if } \; \{ i \in I \mid a_i = a_i' \} \in \filter
\]
identifies two elements $\de{a} = (a_i)_{i \in I}$ and $\de{a}' = (a_i')_{i \in I}$
of the direct product precisely if they agree in $\filter$-many components.
We denote $\sim_\filter$-equivalence classes of elements $\de{a}$ as
$\de{a}_\filter \in \prod A_i/{\sim_\filter}$, and again extend this notation
to $n$-tuples.
Note that for $\de{a}^{(1)}, \de{b}^{(1)}, \dots, \de{a}^{(k)}, \de{b}^{(k)} \in \prod A_i$,
\begin{align*}
	(\de{a}^{(1)}, \dots, \de{a}^{(k)}) \sim_\filter (\de{b}^{(1)}, \dots, \de{b}^{(k)})
	\,\iff\, 
	\de{a}^{(1)} \sim_\filter \de{b}^{(1)}, \dots, \de{a}^{(k)} \sim_\filter \de{b}^{(k)}
.\end{align*}
The interpretation of the relation symbols $R \in \sigma$ in
$\str{C} := \prod \str{A}_i/{\sim_\filter}$ is defined according to 
\[
	\de{a} \in R^{\str{C}}  \; \mbox{ if } \; \{ i \in I \mid \de{a}_i \in R^{\str{A}_i} \} \in \filter,
\]
representing membership in $\filter$-many components (which is
well-defined, i.e.\ independent of representatives, 
in the quotient, due to closure of $\filter$ under finite intersections). Notation
like $\str{A}^I/\filter$ or $\str{A}^I/\uf$ correspondingly refers to \emph{reduced powers} or \emph{ultrapowers}, i.e.\ reduced products w.r.t.\ a filter or ultrafilter
for a family of structures with constant entry $\str{A}$ for the component structures $\str{A}_i$. All of these conventions extend naturally to $\sigma$-structures with assignments $\alpha_i := \de{a} \in A_i^n$
as component objects. 
Already for the definition of the quotient and the interpretation of the relation symbols it is suggestive to evaluate formulae over the direct product
$\prod_{i \in I} (\str{A}_i,\alpha_i)$, with assignments
$\alpha = \de{a} \in (\prod_{i \in I} A_i)^n$,
in the boolean powerset algebra $\powset{I}$, giving 
\[
	\brck{\phi}_\alpha := \{ i \in I \mid \str{A}_i,\alpha_i\models \phi \} \in \powset{I}
\]
as a ``truth value'' to $\phi$ over $\prod_{i \in I}(\str{A}_i,\alpha_i)$.
Then the interpretation of
the relations in the reduced product $\prod \str{A}_i/\filter$ amounts to
the following equivalence for all atomic formulae $\phi$:
\[
	\prod \str{A}_i/\filter,\alpha_\filter \models \phi
	\quad
	\Longleftrightarrow \quad
	\brck{\phi} \in \filter.
\]
The classical theorem of \L o\'s states precisely this equivalence for all
$\phi \in \FO$ in the case of ultraproducts.

\begin{definition}[Łoś property]\label{definition:los}
	We say that an abstract logic $\al$ has the \emph{Łoś property} if for every pointed ultraproduct $(\prod \str{A}_i/\uf, \alpha_\uf)$ and formula $\phi \in \al(\signature)$,
	\begin{align*}
		\prod \str{A}_i / \uf, \alpha_{\uf} \models \phi & \iff
		\brck{\phi}_\alpha \in \uf 
		.\end{align*}
\end{definition}

\begin{theorem}[Łoś {\cite[Theorem~9.5.1]{Hodges1993}}]\label{theorem:los}
	First-order logic has the Łoś property.
\end{theorem}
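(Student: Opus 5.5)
The plan is the standard induction on the construction of first-order formulae, carried out in the boolean algebra $\powset{I}$ via the truth values $\brck{\phi}_\alpha$. We fix an ultraproduct $\str{C} := \prod \str{A}_i/\uf$ and prove, by induction on $\phi \in \FO(\signature)$ and simultaneously for all assignments $\alpha$ (with representatives $\de{a}$ of the assigned elements), that
\begin{align*}
	\str{C}, \alpha_{\uf} \models \phi \quad \iff \quad \brck{\phi}_\alpha \in \uf .
\end{align*}
We may take $\lnot$, $\land$ and $\exists$ as primitive, with equality treated like any other relation, or handled directly since $\de{a} \sim_\uf \de{b}$ is defined by agreement on $\uf$-many components. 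The first thing to check is that $\brck{\phi}_\alpha$ does not depend on the choice of representatives: two representatives agree on a set in $\uf$, and on that set the component truth values coincide. Filters are closed under supersets and finite intersections, so this gives independence modulo $\uf$.

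The base case is the definition of the relations in $\str{C}$, already recorded above as the equivalence for atomic formulae; $\ltrue$ is trivial because $I \in \uf$. For conjunction we use $\brck{\phi \land \psi}_\alpha = \brck{\phi}_\alpha \cap \brck{\psi}_\alpha$: this intersection lies in $\uf$ iff both sets do, by closure under intersections and supersets. Negation is the only place where the ultrafilter property is needed, since $\brck{\lnot\phi}_\alpha = I \setminus \brck{\phi}_\alpha$ lies in $\uf$ iff $\brck{\phi}_\alpha \notin \uf$.

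The existential step $\phi = \exists x\, \psi$ is the main obstacle. The direction from $\str{C}$ to $\uf$ is easy: a witness $\de{b}_\uf$ gives $\brck{\psi}_{\alpha[x \mapsto \de{b}]} \in \uf$ by the induction hypothesis, and this set is contained in $\brck{\exists x \psi}_\alpha$. For the converse, suppose $J := \brck{\exists x\,\psi}_\alpha \in \uf$. We use the axiom of choice to pick, for each $i \in J$, some $b_i \in A_i$ with $\str{A}_i, \alpha_i[x \mapsto b_i] \models \psi$, and an arbitrary $b_i \in A_i$ for $i \notin J$; nonemptiness of the domains is what makes this possible. Then $\de{b} = (b_i)_{i \in I}$ satisfies $\brck{\psi}_{\alpha[x \mapsto \de{b}]} \supset J$, hence lies in $\uf$, and the induction hypothesis gives $\str{C}, \alpha_\uf[x \mapsto \de{b}_\uf] \models \psi$. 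The only care needed here is the simultaneous quantification over all assignments in the induction hypothesis and the use of choice to assemble the witness.
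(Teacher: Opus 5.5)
Your proof is correct: it is the standard induction on formulae, using maximality of $\uf$ for negation, closure of filters under finite intersections and supersets for conjunction, and choice over nonempty domains for the existential step. The paper does not prove this theorem itself but cites Hodges, and it re-sketches exactly this argument in the proof of Lemma~\ref{lemma:growing-los}, so your approach is essentially the paper's.
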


\begin{definition}[compactness property]\label{definition:compactness}
	We say that an abstract logic $\al$ has the \emph{compactness property} if every \emph{finitely satisfiable} set of formulae is \emph{satisfiable}, i.e.\ for every set of formulae $\Phi \subset \al(\signature)$ the following are equivalent:
	\begin{enumerate}[label=(\roman*)]
		\item\label{prop:los-implies-compactness:i} $\Phi$ has a model $\str{A}, \alpha \models \Phi$, and
		\item\label{prop:los-implies-compactness:ii} every finite $\Phi_0 \subset \Phi$ has a model $\str{A}_0, \alpha_0 \models \Phi_0$.
	\end{enumerate}
\end{definition}

It is a well-known fact that for abstract logics the Łoś property implies compactness (see, e.g., \cite[Corollary~4.1.11]{ChangKeisler1990} for the $\FO$ version).

\paragraph{} There is a set-theoretically much deeper result due to Keisler and Shelah stating that two structures are indistinguishable by first-order formulae if, and only if, they possess isomorphic ultrapowers~\cite[Theorem~6.1.15]{ChangKeisler1990}.
\par As a corollary to the theorem,
having the Łoś property is equivalent to being a fragment of
first-order logic.
Put differently, in the context of Definition~\ref{definition:abstract-logics},
first-order logic is the global
maximum among all abstract logics having the Łoś property.

\begin{lemma}\label{lemma:growing-los}
	Let $\al_1$ and $\al_2$ be two abstract logics that have the
        Łoś property and $\al$ the \emph{$\FO$-closure} of $\al_1 \cup
        \al_2$, i.e.\ $\al(\signature)$ is
the closure of $\al_1(\signature) \cup \al_2(\signature)$ under
boolean connectives, universal and existential quantification.
Then $\al$ also has the Łoś property.
\begin{proof}[Proof by syntactic induction] The proof is very similar
  to the usual proof of the Łoś property for first-order logic. The
  base case, where $\phi \in \al_1 \cup \al_2$, is given by
  assumption. For the induction step, we use the maximality of $\uf$
  to show compatibility with negation and disjunction,
  while closure under finite intersections and supersets takes care of conjunctions. The step for the existential quantifier also works as usual.
	\end{proof}
\end{lemma}

\begin{lemma}\label{lemma:disagreeing-models-one-step}
	Let $\hat{\al}$ be an abstract logic extending the abstract logic $\al$. Furthermore, let $\phi \in \al(\signature)$, $\psi \in \hat\al(\signature)$ and $T \subset \al(\signature)$. If for all $\chi \in \al(\signature)$, $T \nmodels \psi \liff \chi$ then
	\begin{align*}
		T \cup \set{\phi}                        \nmodels \psi \liff \chi & \quad \text{for all } \chi \in \al(\signature)\phantom{.} \\
		\text{or } \qquad T \cup \set{\lnot \phi} \nmodels \psi \liff \chi & \quad \text{for all } \chi \in \al(\signature).
	\end{align*}
	\begin{proof}
		Towards a contradiction, assume there exist formulae $\chi$ and $\chi^{\prime}$ such that $T \models \phi \limplies (\psi \liff \chi)$ and $T \models \lnot \phi \limplies (\psi \liff \chi^{\prime})$. Then $T \models \eta$ for $\eta$ given as $\eta \defas \psi \liff ((\phi \land \chi) \lor (\lnot \phi \land \chi^{\prime}))$, contradicting our assumption as $\eta$ is in $\al(\signature)$.
	\end{proof}
\end{lemma}

\begin{lemma}\label{lemma:disagreeing-models}
	Let $\hat{\al}$ be a compact abstract logic extending an abstract logic $\al$ and $\phi \in \hat{\al} \setminus \al$ not expressible in $\al$. Then there are pointed $\signature$-structures $(\str{A}, \alpha)$ and $(\str{B}, \beta)$ with
	\begin{align*}
		\Th<\al>{\str{A}, \alpha} & = \Th<\al>{\str{B}, \beta}
		,\end{align*}
	while $\str{A}, \alpha \models_{\hat{\al}} \phi$ and
	$\str{B}, \beta \models_{\hat{\al}} \lnot \phi$.
\end{lemma}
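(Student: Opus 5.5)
The plan is a standard Lindström-style compactness argument that uses Lemma~\ref{lemma:disagreeing-models-one-step} to enumerate $\al(\signature)$. The aim is to build a complete $\al$-theory $T$ such that neither $T \cup \set{\psi}$ nor $T \cup \set{\lnot\psi}$ is inconsistent, where $\psi \defas \phi$. Any model of the first, together with any model of the second, then gives the required pair $(\str{A},\alpha)$, $(\str{B},\beta)$.

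First, reduce to the right starting point. Since $\phi$ is not expressible in $\al$, for every $\chi \in \al(\signature)$ we have $\emptyset \nmodels \phi \liff \chi$. So the hypothesis of Lemma~\ref{lemma:disagreeing-models-one-step} holds for $T_0 \defas \emptyset$, where $\models$ denotes $\hat{\al}$-consequence; since $\hat\al \succeq \al$, formulae of $\al$ are formulae of $\hat\al$.

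Next, apply Lemma~\ref{lemma:disagreeing-models-one-step} repeatedly, transfinitely. Well-order $\al(\signature)$ as $(\phi_\xi)_{\xi<\kappa}$. At successor steps, set $T_{\xi+1} \defas T_\xi \cup \set{\phi_\xi}$ or $T_\xi \cup \set{\lnot\phi_\xi}$, choosing whichever preserves the invariant
\[
T_\xi \nmodels \phi \liff \chi \quad \text{for all } \chi \in \al(\signature).
\]
At limit stages take unions. Preservation at limits is the one step where compactness of $\hat\al$ enters, and it is the main obstacle. Suppose $\bigcup_{\xi<\lambda} T_\xi \models \phi \liff \chi$. Then $\bigcup_{\xi<\lambda} T_\xi \cup \set{\lnot(\phi\liff\chi)}$ is unsatisfiable. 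By compactness of $\hat\al$ (applied to this $\hat\al(\signature)$-set), some finite subset is already unsatisfiable. That finite subset lies in $T_\xi \cup \set{\lnot(\phi\liff\chi)}$ for some $\xi<\lambda$, so $T_\xi \models \phi\liff\chi$, contradicting the invariant at stage $\xi$. A single ordinal-indexed enumeration avoids any need for Zorn's lemma.

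Let $T \defas \bigcup_\xi T_\xi$. Then $T$ is complete, i.e.\ it contains $\chi$ or $\lnot\chi$ for every $\chi\in\al(\signature)$, and by the invariant $T \nmodels \phi \liff \chi$ for all $\chi$. Taking $\chi = \ltrue$ gives $T\nmodels\phi$, so $T\cup\set{\lnot\phi}$ has a model $(\str{B},\beta)$. Taking $\chi=\lfalse$ gives $T\nmodels\lnot\phi$, so $T\cup\set{\phi}$ has a model $(\str{A},\alpha)$. Both pointed structures satisfy $T$, and $T$ is complete, so $\Th<\al>{\str{A},\alpha} = T = \Th<\al>{\str{B},\beta}$, while $\str{A},\alpha\models_{\hat\al}\phi$ and $\str{B},\beta\models_{\hat\al}\lnot\phi$.

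Two smaller points need a check. First, boolean closure of $\al$ is used implicitly in Lemma~\ref{lemma:disagreeing-models-one-step}. Second, $T$ itself must be satisfiable; this follows from $T\nmodels\lfalse$, which is the instance of the invariant with $\chi$ replaced by the equivalent choices above.
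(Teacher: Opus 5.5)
Your proof is correct and is essentially the paper's argument. The paper uses compactness of $\hat{\al}$ to show that the sets $T$ with $T \nmodels \phi \liff \chi$ for all $\chi \in \al(\signature)$ are closed under unions of $\subset$-chains. It then takes a $\subset$-maximal such $T$ by Zorn's Lemma, gets completeness of $T$ from Lemma~\ref{lemma:disagreeing-models-one-step}, and obtains the two models from $\chi = \ltrue$ and $\chi = \lfalse$, exactly as you do. Your transfinite enumeration is just the Zorn argument unrolled; note that well-ordering $\al(\signature)$ itself uses choice, so it does not really avoid Zorn's Lemma in general, though this is harmless.
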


\begin{proof}
  Compactness of $\hat{\al}$ ensures that, in the situation of
  Lemma~\ref{lemma:disagreeing-models-one-step}, the sets $T$ are closed under unions of $\subset$-chains. Applying Zorn's Lemma yields a $\subset$-maximal such $T$. Lemma~\ref{lemma:disagreeing-models-one-step} ensures that $T$ is complete. Furthermore, $T \cup \set{\phi}$ as well as $T \cup \set{\lnot \phi}$ are satisfiable, as $\phi$ would otherwise be equivalent to $\ltrue$ or $\lfalse$ under $T$.
\end{proof}

Combining these observations, 
we obtain the following corollary to the theorem by Keisler and Shelah.

\begin{corollary}\label{corollary:los-fo}
	Let $\al$ be an abstract logic. Then the following are equivalent:
	\begin{enumerate}[label=(\roman*)]
		\item\label{corollary:los-fo:1} $\al$ has the Łoś property.
		\item\label{corollary:los-fo:2} $\al \preceq \FO$.
	\end{enumerate}
\end{corollary}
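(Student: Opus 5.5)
The two directions are of very different depth. For (ii)~$\Rightarrow$~(i) I would take any $\phi \in \al(\signature)$ and pick an equivalent $\psi \in \FO(\signature)$, which exists since $\al \preceq \FO$. For any pointed ultraproduct we then have $\prod \str{A}_i/\uf, \alpha_\uf \models \phi$ iff $\prod \str{A}_i/\uf, \alpha_\uf \models \psi$. By Theorem~\ref{theorem:los} this holds iff $\brck{\psi}_\alpha \in \uf$. Since $\phi$ and $\psi$ agree on every component $(\str{A}_i,\alpha_i)$, we have $\brck{\psi}_\alpha = \brck{\phi}_\alpha$, and this direction is done.

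For (i)~$\Rightarrow$~(ii), suppose $\al$ has the Łoś property and, towards a contradiction, that some $\phi \in \al(\signature)$ is not expressible in $\FO$.
\begin{itemize}
\item By the finite occurrence property we may take $\signature$ finite. We may also treat the free variables of $\phi$ as fresh constants, or as unary predicates interpreted as singletons, so that we work with sentences.
\item Let $\hat\al$ be the $\FO$-closure of $\FO \cup \al$. By Lemma~\ref{lemma:growing-los}, with Theorem~\ref{theorem:los} for the $\FO$ part, $\hat\al$ has the Łoś property, hence is compact.
\item Since $\hat\al$ is a compact extension of $\FO$ containing $\phi$, Lemma~\ref{lemma:disagreeing-models} gives pointed structures $(\str{A},\alpha)$ and $(\str{B},\beta)$ with the same $\FO$-theory, while $\str{A},\alpha \models \phi$ and $\str{B},\beta \models \neg\phi$.
\end{itemize}

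Next I would apply the Keisler--Shelah theorem~\cite[Theorem~6.1.15]{ChangKeisler1990} to the expansions $(\str{A},\alpha)$ and $(\str{B},\beta)$, with the assignments named by constants. This yields an ultrafilter $\uf$ on some index set $I$ and an isomorphism $\iota\colon \str{A}^I/\uf \simeq \str{B}^I/\uf$ sending the diagonal image $\alpha_\uf$ of $\alpha$ to $\beta_\uf$. The Łoś property for $\al$ gives $\str{A}^I/\uf, \alpha_\uf \models \phi$, because $\brck{\phi}_\alpha = I \in \uf$. Likewise $\str{B}^I/\uf, \beta_\uf \models \neg\phi$, because $\brck{\phi}_\beta = \emptyset \notin \uf$. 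Isomorphism invariance of $\al$ then transfers $\phi$ along $\iota$, which is a contradiction.

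The main obstacle is the bookkeeping needed to apply Keisler--Shelah and Lemma~\ref{lemma:disagreeing-models} to pointed structures rather than sentences. The isomorphism must respect the assignments, and the renaming and expansion conventions of Definition~\ref{definition:abstract-logics} must let us pass between assignments and named constants, or singleton predicates since signatures are relational. I would handle this by interpreting a fresh unary predicate $P_j$ as $\{\alpha(x_j)\}$. An isomorphism of the expanded ultrapowers then necessarily maps $(\alpha(x_j))_\uf$ to $(\beta(x_j))_\uf$, because Łoś for $\FO$ keeps $P_j$ a singleton.

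The restriction to a finite signature is harmless because $\phi$ mentions only finitely many symbols. Alternatively, the contradiction can be obtained more directly: $\FO$-equivalence gives the isomorphic ultrapowers, and these must agree on $\phi$, so $\phi$ is determined by the $\FO$-theory. Compactness of $\hat\al$ then turns this into a single defining $\FO$ formula by the standard argument. I would present the Lemma~\ref{lemma:disagreeing-models} route, since it is already set up in the paper.
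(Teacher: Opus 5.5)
Your proposal is correct and follows the paper's argument. You form the $\FO$-closure of $\al \cup \FO$, get compactness from Lemma~\ref{lemma:growing-los}, obtain $\FO$-equivalent pointed structures that disagree on $\phi$ via Lemma~\ref{lemma:disagreeing-models}, and reach a contradiction through Keisler--Shelah, the Łoś property of $\al$ and isomorphism invariance. Your singleton-predicate bookkeeping, which guarantees that the ultrapower isomorphism sends $\alpha_\uf$ to $\beta_\uf$, makes explicit a step the paper's proof passes over silently.
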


\begin{proof}
	That \ref{corollary:los-fo:2} implies~\ref{corollary:los-fo:1} is obvious. For  \ref{corollary:los-fo:1} $\Rightarrow$~\ref{corollary:los-fo:2} we give
	an indirect argument.
	Towards a contradiction, assume that $\al$ has the Łoś
        property, but there is some $\phi \in \al(\signature)$ that is
        not expressible in $\FO$. Let $\hat{\LL}$ be the
        $\FO$-closure of $\al \cup \FO$. Then, by Lemma~\ref{lemma:growing-los}, $\hat{\LL}$ also has the Łoś property and is thus compact. Therefore, we can apply Lemma~\ref{lemma:disagreeing-models} to find pointed $\signature$-structures $(\str{A}, \alpha)$ and $(\str{B}, \beta)$ with $\Th<\FO>{\str{A}, \alpha} = \Th<\FO>{\str{B}, \beta}$, while $\str{A}, \alpha \models \phi$ and $\str{B}, \beta \models \lnot \phi$.
	\par As $(\str{A}, \alpha)$ and $(\str{B}, \beta)$ are indistinguishable for $\FO$, we may apply Keisler--Shelah to obtain two isomorphic ultrapowers $(\prod \str{A}/\uf, \alpha_{\uf}) \isom (\prod \str{B}/\uf, \beta_{\uf})$ over $\str{A}$ and $\str{B}$, respectively. The Łoś property of $\al$ ensures that ${\prod \str{A}/\uf, \alpha_{\uf} \models \phi}$, while ${\prod \str{B}/\uf, \beta_{\uf} \models \lnot \phi}$, contradicting the isomorphism invariance of $\al$. Thus, $\al$ has to be a fragment of $\FO$.
\end{proof}

For k-quantifier logics, we obtain an even stronger result in the sense that there is a uniform translation of every quantifier into first-order logic if the corresponding k-quantifier logic has the Łoś property.

\begin{theorem}[uniform $\FO$-translation]\label{theorem:uniform-translations}
	Let $\kl$ be a $k$-quantifier logic
        that has the Łoś property. 
	Then for every quantifier $\quantifier \in \quantifiers$
	and $k$-ary relation symbol $R \notin \signature_{\quantifier}$
	there is a formula $\eta \in \FO(\signature_{\quantifiers} \cup \set{R})$ 
	with $k$ free variables such that for all signatures 
	$\signature \supset \signature_{\quantifier}$ with $R \notin \signature$, 
	pointed $\signature$-structures $(\str{A}, \alpha)$ and $\phi \in \kl(\signature)$
	\begin{align*}
		\str{A}, \alpha \models_{\kl} \quantifier \phi \quad & \iff \quad \str{A}, \alpha \models_{\FO} \eta[\phi^{\ast}/R]
	\end{align*}
	where $\phi^{\ast} \in \FO(\signature)$ is a translation of $\phi$ to first-order logic.
	\par In particular, this yields a \emph{uniform} translation from $\kl$ into $\FO$.
	\begin{proof}
		By Corollary~\ref{corollary:los-fo}, we know that $\kl$ is a
                fragment of $\FO$. Thus, let ${\eta \in
                  \FO(\signature_{\quantifier} \cup \set{R})}$ be a
                translation of
		$\quantifier R(x_1,...,x_k) \in \kl(\signature_{\quantifier} \cup
                \set{R})$ into $\FO$.
		\par Take any $\signature \supset \signature_{\quantifier}$, a
		fresh relation symbol ${R^{\prime}} \notin \signature$, $\phi \in \kl(\signature)$ with translation $\phi^{\ast} \in \FO(\signature)$ and a pointed $\signature$-structure $(\str{A}, \alpha)$. Define
		\begin{align*}
			{R^{\prime}}^{\str{A}} \defas R^{\str{A} \restriction \signature_{\quantifier}} \defas \set{\gamma \in A^k}[\str{A}, \gamma \models \phi].\end{align*}
		Then, as k-quantifier logics are well-behaved w.r.t.\ expansions,
		\begin{align*}
			\str{A}, \alpha \models_{\kl} \quantifier \phi
			 			 & \iff \str{A}, \alpha, {R^{\prime}}^{\str{A}} \models_{\kl} \quantifier {R^{\prime}}(x_1, \dots, x_k)                                                                                        \\
			 & \iff \str{A} \restriction \signature_{\quantifier}, \alpha, R^{\str{A} \restriction \signature_{\quantifier}} \models_{\kl} \quantifier R(x_1, \dots, x_k)                                  \\
			 & \iff \str{A} \restriction \signature_{\quantifier}, \alpha, R^{\str{A} \restriction \signature_{\quantifier}} \models_{\FO} \eta.                                                           \\
			\intertext{First-order logic is invariant under renaming of relations and well-behaved w.r.t.\ expansions. Thus, we obtain}
			\str{A}, \alpha \models_{\kl} \quantifier \phi
			 & \iff \str{A} \restriction \signature_{\quantifier}, \alpha, {R^{\prime}}^{\str{A}} \models_{\FO} \eta[{R^{\prime}}/R]                                                                       \\
			 & \iff \str{A}, \alpha \models_{\FO} \eta[\phi^{\ast}/R]
			.\qedhere\end{align*}
	\end{proof}
\end{theorem}

\section{A Lindström theorem}\label{section:lindstroem}

In this section, we establish the maximality of $k$-quantifier logics that have the Łoś property under the corresponding notion of bisimulation invariance and the Łoś property itself. Crucially, we do \emph{not} refer to Keisler--Shelah for the proof, but make do with much more elementary arguments.

\begin{lemma}[lifting witnesses]\label{lemma:lifting-witnesses}
	Let $\kl$ be a $k$-quantifier logic that
	has the Łoś property, $\quantifier \in \quantifiers$, and $(\str{A} , \alpha_{\uf}) \defas (\prod \str{A}_i/\uf, \alpha_{\uf})$
	a $k$-pointed ultraproduct based on
	$(\str{A}_i)_{i \in I}$ and ultrafilter $\uf$ over~$I$. Then for any
	$J \in \uf$ and family of witnesses $\collection{s_i}[i \in J]$
	with $s_i \in \quantifier(\str{A}_i, \alpha_i)$, there is an $s
        \in \quantifier(\str{A}, \alpha_{\uf})$
	and $K \in \uf$ such that 
	\[
		s \subset \{ \gamma_\uf \mid \gamma_i \in s_i \mbox{ for all } i \in J \cap K \}
	.\]
\end{lemma}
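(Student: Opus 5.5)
The natural approach is to encode the family of witnesses by a fresh $k$-ary relation and then apply the Łoś property to a single quantified formula. Fix a fresh $k$-ary relation symbol $R \notin \signature$. On each component with $i \in J$, interpret $R^{\str{A}_i} \defas s_i \subset A_i^k$; for $i \notin J$, let $R^{\str{A}_i} \defas \emptyset$, since the choice there is irrelevant. This gives expansions $(\str{A}_i, R^{\str{A}_i})$ of the component structures. Because $\quantifier$ is well-behaved w.r.t.\ expansions, its witness sets in the expanded structures coincide with the original ones, $\quantifier((\str{A}_i,R^{\str{A}_i}),\alpha_i) = \quantifier(\str{A}_i,\alpha_i)$. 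Hence, for every $i \in J$, the witness $s_i$ shows that
\[
\str{A}_i, R^{\str{A}_i}, \alpha_i \models \quantifier R(x_1,\dots,x_k).
\]
It follows that $\brck{\quantifier R(x_1,\dots,x_k)}_\alpha \supset J \in \uf$.

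Next, let $\str{A}^{+} \defas \prod (\str{A}_i, R^{\str{A}_i})/\uf$. This is the ultraproduct of the expanded family, and its $\signature$-reduct is exactly $\str{A} = \prod \str{A}_i/\uf$. The Łoś property of $\kl$ gives $\str{A}^{+}, \alpha_\uf \models \quantifier R(x_1,\dots,x_k)$. So there is some $s \in \quantifier(\str{A}^{+},\alpha_\uf)$ with $s \subset R^{\str{A}^{+}}$. Using expansion-invariance of $\quantifier$ once more, $s \in \quantifier(\str{A},\alpha_\uf)$.

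It remains to unpack $s \subset R^{\str{A}^{+}}$. By the definition of relations in an ultraproduct, every $\gamma_\uf \in s$ has $\{ i \mid \gamma_i \in s_i\} \cap J \in \uf$. For $i \notin J$ we have $R^{\str{A}_i} = \emptyset$, so this index set is contained in $J$. We may then modify the representative outside that $\uf$-large set without changing the class $\gamma_\uf$.

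\textbf{Main obstacle: the quantifier order.} The statement asks for a single $K \in \uf$ that works uniformly for all elements of $s$, whereas the argument so far produces, for each $\gamma_\uf \in s$, its own set $K_\gamma$ on which $\gamma_i \in s_i$. This gap is resolved by reading the set on the right of the inclusion as
\[
\{ \gamma_\uf \mid \gamma_i \in s_i \text{ for all } i \in J \cap K \},
\]
where membership is determined by the class $\gamma_\uf$ having \emph{some} representative $\gamma$ with the stated property. Under this reading, we simply take $K \defas J$. For each $\gamma_\uf \in s$ we choose the representative $\gamma'$ that agrees with $\gamma$ on the $\uf$-large set $\{i \mid \gamma_i \in s_i\}$, and on the remaining indices of $J$ takes an arbitrary value in $s_i$. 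If some $s_i$ is empty, then $\emptyset \in \quantifier(\str{A}_i,\alpha_i)$; we handle this either by shrinking $J$ to the indices with $s_i \neq \emptyset$, or by noting that the requirement is vacuous on those indices after intersecting with $K$. This fixes $K$ uniformly, and the inclusion follows. If instead a genuinely uniform $K$ is intended for fixed representatives, no such choice is possible in general, and the representative-adjusting reading above is the one I would adopt.
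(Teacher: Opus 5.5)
Your proof is correct and follows the paper's argument. The fresh $k$-ary $R$ is interpreted as $s_i$ on $J$ and as $\emptyset$ elsewhere, the \L o\'s property is applied to $\quantifier R(x_1,\dots,x_k)$, and expansion-invariance transfers $s$ back to $\quantifier(\str{A},\alpha_\uf)$. Your representative-wise reading of the target set is exactly what the paper's terse final step (``using the \L o\'s property, we find a $K$'') needs, and it is all the later application in Lemma~\ref{lemma:finding-saturated-extensions} uses.

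The one loose end, empty $s_i$, closes as follows. If $s=\emptyset$ there is nothing to show. Otherwise any $\gamma_\uf \in s$ gives $\{i \in J \mid \gamma_i \in s_i\} \in \uf$, so $K \defas \{i \in J \mid s_i \neq \emptyset\} \in \uf$ works.
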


\begin{proof}
	Consider for all $i \in I$ expansions $\str{A}_i^{\ast}$ of $\str{A}_i$ by a fresh relation $R \notin \signature$ of arity $k$ via
	\begin{align*}
		R^{\str{A}_i^{\ast}} & \defas \begin{cases}
			                              s_i & \text{if } i \in J, \\
			                              \emptyset                                                     & \text{otherwise}
			                              .\end{cases}
	\end{align*}
	The ultraproduct $\str{A}^{\ast} \defas \prod_{i \in I}
        \str{A}_i^{\ast} / \uf$ is an expansion of $\str{A}$ by $R$
        and, by definition of k-quantifiers
        (Definition~\ref{definition:k-quantifiers}), the witness sets
        $\quantifier(\str{A}, \alpha_{\uf})$ and
        $\quantifier(\str{A}^{\ast}, \alpha_{\uf})$ are
        identical. Thus, it suffices to find a suitable $s \in
        \quantifier(\str{A}^{\ast}/\uf, \alpha_{\uf})$.

        \par By construction, $\str{A}_i^{\ast}, \alpha_i \models
		\quantifier R(x_1, \dots, x_k)$ is witnessed by $s_i \in
		\quantifier(\str{A}_i, \alpha_i)$ for all $i \in J$. The Łoś property of
	$\kl$ thus guarantees that
	${\str{A}^{\ast}, \alpha_{\uf} \models \quantifier R(x_1,
          \dots, x_k)}$, witnessed by some witness $s \in
        \quantifier(\str{A}, \alpha_{\uf})
=  \quantifier(\str{A}^\ast, \alpha_{\uf}) $
        with ${s \subset R^{\str{A}^{\ast}}}$. Using the Łoś property, we find a
	$K \in \uf$ s.t.\
	\begin{align*}
		s \subset R^{\str{A}^{\ast}} &= \{ \gamma_\uf \mid \gamma_i \in s_i \mbox{ for all } i \in J \cap K \}
	.\qedhere\end{align*}
\end{proof}

By combining our previous insights, we are now in a position to show the existence of $\kl$-saturated structures by rather pedestrian arguments in the case of countable $\sigma$ and $\quantifiers$.

\begin{lemma}[saturated extensions]\label{lemma:finding-saturated-extensions}
	Let $\LL := \kl$ be a k-quantifier logic with countably many $k$-quantifiers that has the Łoś property
        and $\str{A}$ a $\signature$-structure
for countable $\signature$. Then for every non-principal ultrafilter $\uf$ over $\N$,
$\str{A}^{\ast} \defas \str{A}^\N / \uf$ 
is an $\LL$-saturated $\LL$-elementary extension of $\str{A}$.
	\begin{proof}
 Using the Łoś property, it is easy to verify that $\str{A}^{\ast}$ is an $\LL$-elementary extension via the embedding $\iota: a \mapsto [(a)_{i \in I}]_{\uf}$.
 \par
		Let $\Psi$ be any  $Q$-type of $(\str{A}^{\ast},
                \alpha_{\uf})$. As $\LL(\signature)$ and thus $\Psi
                \subset \LL(\signature)$ is countable, we may
                enumerate it as $(\delta_j)_{j \in \N}$ and consider
                for every $k \in \N$ the set $J_k \defas \brck{Q
                  (\delta_0 \land \dots \land \delta_k)} \cap \set{i
                  \in I}[i \geq k]$.
All the $J_k$ are in $\uf$ as
                $\Psi$ is a $Q$-type of $(\str{A}, \alpha_{\uf})$, since 
                $\uf$ extends the Fr\'echet filter and filters are
                closed under finite intersections. Furthermore, the
                sequence $(J_k)_{k \in \N}$ is decreasing with
                $\bigcap J_k = \emptyset$. For each $i \in \N$, we set
                $k_i \defas \max \set{k \in \N}[i \in J_k]$ and pick a
                witness $s_i$ for $(\str{A}, \alpha_i) \models Q
                (\delta_0 \land \dots \land \delta_{k_i})$. 
By Lemma~\ref{lemma:lifting-witnesses}, we know that there is a $t_{\uf} \in Q(\alpha_{\uf})$
		and $K \in \uf$ s.t.\ $t_{\uf} \subset \{ \gamma_{\uf}
                \mid \gamma_i \in s_i \text{f.a.\ } i \in K \}$.
                Take any $\delta_k \in \Psi$ and $\gamma_{\uf} \in t_{\uf}$. Then, by construction, for all $i \in J_{k_i} \cap K$, $(\str{A}, \gamma_i) \models \delta_k$. As $J_{k_i} \cap K \in \uf$, $t_{\uf}$ realises $\Psi$.
		\par Similarly, let $\Psi = \set{ \delta_j }[ j \in \N ]$ be an $s_{\uf}$-type of $\str{A}^{\ast}$ and consider for all $k \in \N$,
		\begin{align*}
			J_k \defas \set{i \in \N}[\text{ex.\ } \gamma_i \in s_i \text{ s.t.\ } A, \gamma_i \models \delta_0 \land \dots \land \delta_k] \cap \set{i \in \N}[i \geq k]
		.\end{align*}
For $i \in \N$, set $k_i \defas \max \set{k \in \N}[i \in J_k]$ and iteratively
pick $\gamma_i \in s_i$ for which ${\str{A}, \gamma_i \models \delta_0 \land \dots \land \delta_{k_i}}$. Then $\gamma_{\uf}$ realises $s_{\uf}$ as desired.
	\end{proof}
\end{lemma}

Towards the maximality claim in our Lindstr\"om result, we use the 
natural notion of $\sim_{\kl}$-invariance.

\begin{definition}[$\sim_{\kl}$-invariance]\label{definition:simklinv} 
	Let $\al$ be an abstract $k$-logic and $\kl$ a $k$-quantifier logic. A formula $\phi \in \al(\signature)$ is \emph{$\sim_{\kl}$-invariant} if for all $k$-pointed $\signature$-structures $(\str{A}, \alpha), (\str{B}, \beta)$,
	\begin{align*}
		(\str{A}, \alpha) \sim_{\kl} (\str{B}, \beta)
		\quad &\implies \quad
		(\str{A}, \alpha \models \phi \, \iff \, \str{B}, \beta \models \phi)
                        .\end{align*}
	An abstract $k$-logic $\LL$ is \emph{$\sim_{\kl}$-invariant} if all its
formulae are, i.e.\ $\sim_{\kl}$ implies $\equiv_{\LL}$.
\end{definition}

Finally, we obtain a Lindström theorem that characterises
$\sim_{\kl}$-invariant
$k$-quantifier logics that have the Łoś property.

\begin{theorem}[a Lindström theorem]\label{theorem:lindström}
	Let $\kl$ be a $k$-quantifier logic that has the Łoś property. Then every $\kl$-bisimulation invariant abstract
	$k$-logic $\al$ that has the Łoś property is a fragment of
	$\kl$.
\end{theorem}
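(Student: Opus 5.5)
We argue indirectly, along the lines of the proof of Corollary~\ref{corollary:los-fo}, but with the Hennessy--Milner theorem and saturated ultrapowers taking the place of Keisler--Shelah. Suppose $\al$ is $\sim_{\kl}$-invariant, has the Łoś property, and contains some $\phi \in \al(\signature)$ that is not expressible in $\kl$. By the finite occurrence property we may assume $\signature$ is finite. Quantifiers outside $\quantifiers_{\signature}$ play no role in $\kl(\signature)$, and $\quantifiers_\signature$ may be uncountable; so first we would like to reduce to countably many quantifiers. Since $\phi$ is not expressible in $\kl$ at all, we plan to handle this only after the compactness step, as described below.

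Let $\hat{\al}$ be the $\FO$-closure of $\al \cup \kl$ (or just the boolean closure). By Lemma~\ref{lemma:growing-los}, or directly from the Łoś property of both logics, $\hat{\al}$ has the Łoś property and is therefore compact. Lemma~\ref{lemma:disagreeing-models}, applied to $\kl \preceq \hat{\al}$, then yields $k$-pointed $\signature$-structures $(\str{A},\alpha)$ and $(\str{B},\beta)$ with $\Th{\str{A},\alpha} = \Th{\str{B},\beta}$, $\str{A},\alpha \models \phi$ and $\str{B},\beta \models \neg\phi$.

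Next we pass to saturated ultrapowers. Fix a non-principal ultrafilter $\uf$ on $\N$ and set $\str{A}^\ast := \str{A}^\N/\uf$ and $\str{B}^\ast := \str{B}^\N/\uf$, pointed by the diagonal images $\alpha_\uf$ and $\beta_\uf$. The Łoś property of $\al$ gives $\str{A}^\ast,\alpha_\uf \models \phi$ and $\str{B}^\ast,\beta_\uf \models \neg\phi$. The Łoś property of $\kl$ gives $\Th{\str{A}^\ast,\alpha_\uf} = \Th{\str{A},\alpha} = \Th{\str{B},\beta} = \Th{\str{B}^\ast,\beta_\uf}$. By Lemma~\ref{lemma:finding-saturated-extensions}, both ultrapowers are $\kl$-saturated, and Theorem~\ref{theorem:hennessy-milner} then gives $(\str{A}^\ast,\alpha_\uf) \sim_{\kl} (\str{B}^\ast,\beta_\uf)$. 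This contradicts $\sim_{\kl}$-invariance of $\phi$, so no such $\phi$ exists and $\al \preceq \kl$.

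The main obstacle is the countability hypothesis in Lemma~\ref{lemma:finding-saturated-extensions}: $\signature$ is countable after the finiteness reduction, but $\quantifiers_\signature$ need not be. I would first try to obtain saturation for $\aleph_0$-many quantifiers simultaneously by noting that the proof of Lemma~\ref{lemma:finding-saturated-extensions} only ever handles one countable type $\Psi$ at a time. Each such type lives in a countable sublanguage $\kl[\quantifiers_0](\signature)$, and $\Psi \subset \kl(\signature)$ is a countable set of formulae; the enumeration argument should go through for each countable type regardless of the size of $\quantifiers$. The types used in the proof of Theorem~\ref{theorem:hennessy-milner} are, however, indexed by all finite $\quantifiers_0$, so they may be uncountable. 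If this fails, the fallback is to replace $\N$ by an index set carrying a countably incomplete $\lambda$-regular (or $\lambda^+$-good) ultrafilter, with $\lambda \geq |\quantifiers_\signature|$, and to redo the argument of Lemma~\ref{lemma:finding-saturated-extensions} using the regularity sets in place of the sets $\{i \geq k\}$. Lemma~\ref{lemma:lifting-witnesses} applies verbatim for arbitrary ultrafilters, so only the bookkeeping changes.
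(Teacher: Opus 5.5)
Your argument is the paper's proof. You reduce to a finite signature, get compactness of the $\FO$-closure $\hat{\al}$ of $\al \cup \kl$ via Lemma~\ref{lemma:growing-los}, obtain a $\kl$-equivalent pair disagreeing on $\phi$ from Lemma~\ref{lemma:disagreeing-models}, pass to $\kl$-saturated $\al$-elementary ultrapowers over $\N$ by Lemma~\ref{lemma:finding-saturated-extensions}, and use Theorem~\ref{theorem:hennessy-milner} to contradict $\sim_{\kl}$-invariance. The paper settles your countability worry simply by taking $\quantifiers$ to be countable (its proof says so explicitly), so under that convention your main line is complete. For genuinely uncountable $\quantifiers_{\signature}$, your fallback would need a good ultrafilter, not merely a regular one: regularity alone does not give the coherent choice of finite subtypes that the decreasing chain $\{ i \in \N \mid i \geq k \}$ provides in the countable case.
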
 

\begin{proof}
	Let $\al$ be such an abstract $k$-logic with $\al \not\preceq \kl$ and $\phi \in \al(\signature)$ be any formula 
	not expressible in $\kl$. Since $\al$ has the finite occurrence property, we may assume that $\kl(\signature)$ is countable as $\quantifiers$ is countable by definition and $\phi \in \al(\signature_0)$ for some finite $\signature_0 \subset \signature$.
	Define the logic $\hat\al$ as the $\FO$-closure of $\al \cup \kl$. By Lemma~\ref{lemma:growing-los}, $\hat\al$ has the Łoś property and is thus compact. Hence, we can apply Lemma~\ref{lemma:disagreeing-models} to obtain two pointed $\signature$-structures $(\str{A}, \alpha)$ and $(\str{B}, \beta)$ that have the same $\kl$-theory, while $\str{A}, \alpha \models \phi$ and $\str{B}, \beta \models \lnot \phi$.
	
	Using Lemma~\ref{lemma:finding-saturated-extensions}, we find $\kl$-saturated
	$\al$-elementary extensions $\str{A}^{\ast}$ and $\str{B}^{\ast}$ of
	$\str{A}$ and $\str{B}$, respectively.
	As $\str{A}^{\ast}$ and $\str{B}^{\ast}$ are $\kl$-saturated and
	$(\str{A}^{\ast}, \alpha)$ and $(\str{B}^{\ast}, \beta)$ have the same
	$\kl$-theory, Theorem~\ref{theorem:hennessy-milner} implies that the pointed
	structures are $\kl$-bisimilar, so that $\phi \in \al$ would violate 
	the $\kl$-bisimulation invariance of $\al$ as $\str{A}^{\ast}, \alpha \models \phi$, while $\str{B}^{\ast}, \beta \models \lnot \phi$.
\end{proof}
\par By Corollary~\ref{corollary:los-fo}, there is essentially no difference between
being a fragment of $\FO$ and having the Łoś property. Thus, every
Lindström theorem w.r.t.\ the Łoś property is also a characterisation
theorem, and we obtain the following different perspective on Theorem~\ref{theorem:lindström}.

\begin{corollary}[characterisation theorem]
	Let $\kl$ be a $k$-quantifier logic that has the Łoś property. Then any $\sim_{\kl}$-invariant formula of $\FO^k$ is equivalently expressible in $\kl$.
\end{corollary}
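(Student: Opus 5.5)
The corollary should follow from Theorem~\ref{theorem:lindström} once $\FO^k$, restricted to a single formula, is viewed as an abstract $k$-logic with the Łoś property. Fix $\phi \in \FO^k(\signature)$ with free variables among $\mathcal{X}_k$ that is $\sim_{\kl}$-invariant. I would define the abstract $k$-logic $\al_\phi$ whose formulae over a signature $\signature' \supset \signature$ are the boolean combinations of $\phi$ together with the atomic $k$-variable formulae. Renaming invariance makes the translates of $\phi$ over renamed signatures available as well. All of these formulae are first-order. By Theorem~\ref{theorem:los}, every formula of $\al_\phi$ then has the Łoś property, and $\al_\phi$ is a $k$-logic evaluated on $k$-pointed structures. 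Isomorphism invariance, boolean closure and the finite occurrence property are inherited directly from $\FO$.

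The next step is to check that $\al_\phi$ is $\sim_{\kl}$-invariant. Atomic formulae are invariant because $\kl$-bisimilar positions are atom equivalent; this is the losing condition for \player{2} in Definition~\ref{definition:bisimulation-game}. The formula $\phi$ is invariant by assumption. Invariance is preserved under boolean combinations. Renamed copies of $\phi$ are invariant because the game, like the quantifiers, is well behaved under renaming of symbols outside $\signature_{\quantifier}$. Theorem~\ref{theorem:lindström} then gives $\al_\phi \preceq \kl$, so in particular $\phi$ is equivalent to some $\kl$-formula.

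The main obstacle I expect is bookkeeping rather than substance. Theorem~\ref{theorem:lindström} is stated for abstract $k$-logics that are invariant over all signatures, and its proof uses that $\kl(\signature)$ is countable, which requires countably many quantifiers. If $\al_\phi$ causes any trouble in signatures other than $\signature$, I would instead rerun the proof of Theorem~\ref{theorem:lindström} directly for the single formula $\phi$. That argument goes as follows. Take the $\FO$-closure of $\kl \cup \FO$, which is compact by Lemma~\ref{lemma:growing-los}. Lemma~\ref{lemma:disagreeing-models} yields $\kl$-equivalent pointed structures that disagree on $\phi$. Pass to ultrapowers, which are $\kl$-saturated by Lemma~\ref{lemma:finding-saturated-extensions} and are $\FO$-elementary extensions by Łoś, so they still disagree on $\phi$. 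By Theorem~\ref{theorem:hennessy-milner} these ultrapowers are $\kl$-bisimilar, which contradicts the invariance of $\phi$.
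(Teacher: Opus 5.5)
Your proposal is correct and is essentially the paper's own argument: the paper obtains the corollary in one line from Theorem~\ref{theorem:lindström}, using the easy direction of Corollary~\ref{corollary:los-fo} (\L o\'s' theorem for $\FO$) to see that $\sim_{\kl}$-invariant first-order formulae form an abstract $k$-logic with the \L o\'s property, which is exactly what your $\al_\phi$ does for a single $\phi$. Your fallback of rerunning that proof for $\phi$ alone is the same argument unpacked, and it is the cleaner variant: that proof never uses renaming invariance (your invariance claim for renamed copies of $\phi$ only covers symbols outside every $\signature_{\quantifier}$), and the countability of $\quantifiers$ you flag is assumed tacitly by the paper as well.
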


\section{Summary and Outlook}
This note suggests a view of a certain kind of quantification that is
limited to dealing with $k$-tuples of elements but takes into account 
a built-in notion of accessibility of families of sets of $k$-tuples from given
$k$-tuples. The underlying pattern is thus reminiscent of modal
scenarios (broadly conceived) with a twist towards the involvement of
second-order objects. While similar notions have been explored in
neighbourhood semantics as well as in inquisitive modal logic, our
$k$-quantifiers are in a sense more general in terms of the actual
semantics of concrete quantifiers -- but also more rigid in terms of the
semantics of each individual $k$-quantifier as the isomorphism
type of the underlying plain first-order structure fully determines the
accessibility pattern that defines its semantics over that
structure. The Ehrenfeucht--Fra\"\i ss\'e style notions of back\& forth
equivalence associated with a fixed supply of such quantifiers 
works out naturally as expected. For the associated 
notions of saturation and the natural Hennessy--Milner theorem to go
with it, we here resort to the very strong assumption of compatibility with
ultraproducts. By a much deeper theorem of Keisler and Shelah this
assumption of course actually ties the expressive power of the logics
under consideration to within first-order. Our Lindstr\"om result --
as is the main target of these investigations --  for such $k$-quantifier
logics in the context of abstract $k$-logics is, however,
established on a much more pedestrian route that
illustrates the natural involvement of compactness arguments and
the r\^ole of suitable saturation properties in a much more explicit manner. 
It remains open to which extent weaker assumptions, like, e.g.\ compactness
together with suitable adaptations of a Tarski union property, could
be applied to a similar effect against the extremely general background
assumptions about abstract $k$-logics proposed here. Another promising
direction for related future research could therefore also concern 
meaningful limitations of this generality, maybe towards a focus that
brings us closer again to more familiar scenarios like neighbourhood
or inquisitive semantics.

\bibliographystyle{plain}

\bibliography{references}

\end{document}